\numberwithin{equation}{section}
\newtheorem{theorem}{Theorem}[section]
\newtheorem{lemma}[theorem]{Lemma}
\theoremstyle{definition}
\newtheorem{assumption}[theorem]{Assumption}
\newtheorem{remark}[theorem]{Remark}
\newtheorem{notation}[theorem]{Notation}
\makeatletter\renewenvironment{proof}[1][\proofname] {\par\pushQED{\qed}\normalfont\topsep6\p@\@plus6\p@\relax\trivlist\item[\hskip\labelsep\bfseries#1\@addpunct{.}]\ignorespaces}{\popQED\endtrivlist}
\newcommand\1{\mathbf 1}
\newcommand\al{\alpha}
\newcommand\be{\beta}
\newcommand\dd{\mathrm{d}}
\newcommand\De{\Delta}
\newcommand\Ex{\mathbf E}
\newcommand\ex{\mathrm e}
\newcommand\eps{\varepsilon}
\newcommand\erf{\mathrm{erf}}
\newcommand\erfc{\mathrm{erfc}}
\newcommand\ga{\gamma}
\newcommand\ka{\kappa}
\newcommand\La{\Lambda}
\newcommand\NN{\mathbb N}
\newcommand\Om{\Omega}
\newcommand\om{\omega}
\newcommand\RR{\mathbb{R}}
\newcommand\ZZ{\mathbb{Z}}
\renewcommand\d{~\mathrm{d}}
\renewcommand\phi{\varphi}
\renewcommand\Pr{\mathbf{P}}
\begin{document}

\title[Joint and Conditional Local Limits with Occupation Measures]{Joint and Conditional Local Limit Theorems for Lattice Random Walks and Their Occupation Measures}
\author{Pierre Yves Gaudreau Lamarre}
\address{Princeton University, Princeton, NJ\space\space08544, USA}
\email{plamarre@princeton.edu}
\thanks{This work was partially supported by an NSERC Doctoral fellowship and a Gordon Y.~S.~Wu fellowship.}
\maketitle

\begin{abstract}
Let $S_n$ be a lattice random walk with mean zero and finite variance, and let $\La^a_n$
be its occupation measure at level $a$. In this note, we prove local limit theorems for
$\Pr[S_n=x,\La^a_n=\ell]$ and $\Pr[S_n=x|\La^a_n=\ell]$ in the cases where
$a$, $|x-a|$ and $\ell$ are either zero or at least of order $\sqrt n$.
The asymptotic description of these quantities matches the corresponding probabilities for Brownian
motion and its local time process.

This note can be seen as a generalization of previous results by Kaigh (1975) and Uchiyama (2011).
In similar fashion to these results, our method of proof relies on path decompositions
that reduce the problem at hand to the study of random walks with independent increments.
\end{abstract}

\section{Introduction}

Let $X$ be a random variable supported on the lattice $c+\ZZ$ ($|c|<1$)
with mean zero and variance $\nu\in(0,\infty)$. 
Let $X_1,X_2,X_3,\ldots$ be i.i.d. copies of $X$, and for each $n\in\NN$, let $S_n:=X_1+\cdots+X_n$.
The central limit theorem states that $S_n/\sqrt{n}$ converges in distribution as $n\to\infty$ to a
Gaussian law. A natural refinement of this result consists of the local limit theorem,
which describes the asymptotic behavior of $S_n$ on a finer scale (c.f., \cite[\S49]{GnKo}).

\begin{theorem}[Gnedenko's Local Limit Theorem]\label{Theorem: Local CLT}
Suppose that the lattice $c+\ZZ$ is maximal for $X$. It holds that
\[\sqrt{n}\Pr[S_n=nc+x]=\frac{\ex^{-(nc+x)^2/2n\nu}}{\sqrt{2\pi\nu}}+o(1)\]
as $n\to\infty$, where the error term $o(1)$ is uniform in $x\in\ZZ$.
\end{theorem}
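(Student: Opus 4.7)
The plan is to use the classical Fourier (characteristic function) method. Let $\phi(t)=\Ex[\ex^{\im tX}]$ denote the characteristic function of $X$. Since $S_n-nc$ takes values in $\ZZ$, Fourier inversion on the torus gives
\[\Pr[S_n=nc+x]=\frac{1}{2\pi}\int_{-\pi}^{\pi}\ex^{-\im t(nc+x)}\phi(t)^n\d t,\qquad x\in\ZZ.\]
Writing $g_{n\nu}(y):=(2\pi n\nu)^{-1/2}\ex^{-y^2/(2n\nu)}$ for the centered Gaussian density with variance $n\nu$, its Fourier inversion formula reads
\[g_{n\nu}(nc+x)=\frac{1}{2\pi}\int_{-\infty}^{\infty}\ex^{-\im t(nc+x)}\ex^{-n\nu t^2/2}\d t,\]
and the theorem's right-hand side is precisely $\sqrt{n}\,g_{n\nu}(nc+x)$. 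The claim is therefore equivalent to $\sqrt n\bigl(\Pr[S_n=nc+x]-g_{n\nu}(nc+x)\bigr)\to 0$ uniformly in $x$.

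After the substitution $t\mapsto t/\sqrt n$, the quantity to bound is majorized by
\[\frac{1}{2\pi}\int_{-\pi\sqrt n}^{\pi\sqrt n}\bigl|\phi(t/\sqrt n)^n-\ex^{-\nu t^2/2}\bigr|\d t+\frac{1}{2\pi}\int_{|t|>\pi\sqrt n}\ex^{-\nu t^2/2}\d t,\]
since each phase factor $\ex^{-\im t(nc+x)/\sqrt n}$ has modulus $1$; in particular, the bound is uniform in $x\in\ZZ$. The tail integral is trivially $o(1)$. For the main integral I would split at $|t|=\de\sqrt n$ with $\de>0$ small. On $|t|\le\de\sqrt n$, the hypotheses $\Ex[X]=0$ and $\Ex[X^2]=\nu$ yield the Taylor expansion $\phi(s)=1-\nu s^2/2+o(s^2)$ as $s\to0$; choosing $\de$ so that $|\phi(s)|\le\ex^{-\nu s^2/4}$ on $|s|\le\de$, one obtains $\phi(t/\sqrt n)^n\to\ex^{-\nu t^2/2}$ pointwise with the integrable dominant $\ex^{-\nu t^2/4}+\ex^{-\nu t^2/2}$, and dominated convergence disposes of this piece.

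The critical step is the intermediate range $\de\sqrt n<|t|\le\pi\sqrt n$, and this is where the maximality hypothesis enters. Maximality of the lattice $c+\ZZ$ means exactly that $|\phi(s)|<1$ for every $s\in(-\pi,\pi]\setminus\{0\}$; continuity and compactness then supply some $\rho=\rho(\de)<1$ with $|\phi(s)|\le\rho$ on $\de\le|s|\le\pi$. Consequently this piece is bounded by $4\pi\sqrt n\,\rho^n$, which decays exponentially in $n$. This is the main potential obstacle: without maximality, $|\phi|$ could reach $1$ at some $s_0\neq0$ in $(-\pi,\pi]$, seeding a second Gaussian peak near $s_0\sqrt n$ and producing a genuine extra contribution to $\sqrt n\Pr[S_n=nc+x]$ rather than an error. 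Combining the three estimates gives the stated uniform $o(1)$ bound and completes the proof.
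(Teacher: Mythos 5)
The paper does not actually prove this theorem: it is quoted as Gnedenko's classical local limit theorem and cited to \cite[\S49]{GnKo}, so there is no in-paper argument to compare against. Your characteristic-function proof is precisely the standard argument from that reference (and from essentially every textbook treatment), and it is correct; in particular you correctly identify the role of maximality of the lattice, namely to guarantee $|\phi(s)|<1$ on $(-\pi,\pi]\setminus\{0\}$ and hence to suppress the intermediate frequency range. One small untidiness: for $\de\sqrt n<|t|\le\pi\sqrt n$ the quantity being estimated is $|\phi(t/\sqrt n)^n-\ex^{-\nu t^2/2}|$, and the bound $4\pi\sqrt n\,\rho^n$ only accounts for the $|\phi(t/\sqrt n)|^n$ contribution (with a generous constant, since the interval has length at most $2\pi\sqrt n$); you should also note that $\int_{|t|>\de\sqrt n}\ex^{-\nu t^2/2}\dd t\to0$, which it does with room to spare. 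This omission does not affect the conclusion.
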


Starting from the mid 1970s, there has been a growing interest in local limit theorems involving random
walks that are conditioned on some event of interest, or for other closely related processes.
Notable examples include local limit theorems for random walks conditioned to stay positive
\cite{BJD,Ca,VaWa}, as well as lattice random walks conditioned to avoid returning to the origin
\cite{KaT,Ka,U3} or visiting a particular point or finite set \cite{U1,U2}. Our aim in this note is to
contribute to this line of results by proving local limit theorems for lattice random walks considered
jointly with their occupation measures
\begin{align}\label{Equation: Occupation Measure}
\La_n^a:=\sum_{k=1}^n\1_{\{S_k=a\}},\qquad a\in \RR,~n\in\NN.
\end{align}

It is well known that, with the appropriate normalization, the couple $(S_n,\La_n^{\cdot})$ converges in joint
distribution to $(B^\nu_1,L^\cdot)$, where $(B^\nu_t)_{t\geq0}$ is a Brownian motion with variance $\nu$ and
$(L^a)_{a\in\RR}$ is its local time process on the time interval $[0,1]$ (see, for instance, \cite{BK}). It is
thus natural to expect that the large $n$ limits of the probabilities
\[\Pr[S_n=x,\La^a_n=\ell],\qquad x,a\in\RR,~\ell\in\NN\]
should be described by the joint distribution of $(B^\nu_1,L^a)_{a\in\RR}$. As we will see in Section
\ref{Section: Main Result}, where we state our main results, this is indeed the case.

As one might expect, conditioning a random walk to avoid or visit a fixed level before some time $n$
breaks the independence of increments. Thus, the methods used in the classical local limit
theorems, such as Theorem \ref{Theorem: Local CLT}, cannot be applied directly to joint or conditioned
local limit theorems. A typical workaround for this problem (used, for example, in
\cite{Ca,Ka,U1,VaWa}) is to use path transformations or decompositions to
translate the problem at hand into one about random walks with independent increments. Our methods
of proof rely on such techniques (see Lemma \ref{Lemma: Path Transformation}).

\subsection*{Acknowledgments}

The author thanks Mykhaylo Shkolnikov for his encouragement, guidance, and helpful
discussions.
The author thanks an anonymous referee for pointing out the work of Uchiyama \cite{U1,U2},
which supersedes a result proved in an earlier version of this note.

\section{Main Result}\label{Section: Main Result}

Throughout this note, we make the following assumption.

\begin{assumption}\label{Assumption: Centered Lattice}
$(X_k)_{k\in\NN}$ are i.i.d. copies of $X$, which is supported on $\ZZ$ and such that $\Ex[X]=0$ and
$\Ex[X^2]=\nu\in(0,\infty)$. $\ZZ$ is maximal for $X$, that is, there is no $c,h$ with $h>1$ such that $X$ is
supported on $c+h\ZZ$.
The random walk $S_n=X_1+\cdots+X_n$ ($n\in\NN$) is recurrent (i.e., $S_n=0$ occurs for infinitely many $n$ almost surely), and aperiodic
(i.e., for every $x\in\ZZ$, there exists $n\in\NN$ such that $\Pr[S_m=x]>0$ for every $m\geq n$).
\end{assumption}

We also use the following notation.

\begin{notation}
We use $C$ to denote constants independent of $n$
whose values may change from line to line. We denote the error functions
\[\erf(x):=\frac2{\sqrt{\pi}}\int_0^x\ex^{-y^2}\d y
\quad\text{and}\quad
\erfc(x):=1-\erf(x),\qquad x\in\RR,\]
as well as the functions
\begin{align*}
\phi_\nu(x,a,\ell)&:=\frac{(|a|+|x-a|+\nu\ell)\ex^{-(|a|+|x-a|+\nu\ell)^2/2\nu}}{\sqrt{2\pi\nu}},& x,a\in\RR,~\ell>0,\\
\psi_\nu(a,\ell)&:=2\nu\frac{\ex^{-(|a|+\nu\ell)^2/2\nu}}{\sqrt{2\pi\nu}},& a\in\RR,~\ell>0.
\end{align*}
Given two sequences $(\al_n)_{n\in\NN}$ and $(\be_n)_{n\in\NN}$, we use
$\al_n\sim\be_n$ as $n\to\infty$ to denote that $\lim_{n\to\infty}\al_n/\be_n=1$.
\end{notation}

Our result is the following.

\begin{theorem}\label{Theorem: Main}
Under Assumption \ref{Assumption: Centered Lattice}, for every $\ka>0$, it holds that
\begin{align}
n\Pr[S_n=x,\La^a_n=\ell]
&=\phi_\nu\left(\frac{a}{\sqrt{n}},\frac{x}{\sqrt{n}},\frac{\ell}{\sqrt{n}}\right)
+o(1)\quad\text{and}
\label{Equation: Main Theorem 1}\\
\sqrt{n}\Pr[\La^a_n=\ell]
&=\psi_\nu\left(\frac{a}{\sqrt{n}},\frac{\ell}{\sqrt{n}}\right)+o(1)
\label{Equation: Main Theorem 2}
\end{align}
as $n\to\infty$, where the error terms $o(1)$ are uniform in $x,a\in\ZZ$ and $\ell\in\NN$ such that
\begin{enumerate}
\item $\ell\geq\ka\sqrt{n}$;
\item $a=0$ or $|a|\geq\ka\sqrt{n}$; and
\item $|x-a|=0$ or $|x-a|\geq\ka\sqrt{n}$.
\end{enumerate}
\end{theorem}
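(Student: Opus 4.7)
The plan is to reduce the joint event $\{S_n=x,\La_n^a=\ell\}$ to a convolution of independent pieces via a path decomposition at the $\ell$ ordered visit times $\tau_1<\tau_2<\cdots<\tau_\ell\leq n$ to the level $a$. By the strong Markov property and translation invariance of the increments, the three resulting pieces---the initial segment up to $\tau_1$, the $\ell-1$ excursions between consecutive visits to $a$, and the final segment from $\tau_\ell$ to $n$---are independent, producing an identity of the schematic form
\begin{align*}
\Pr[S_n=x,\La_n^a=\ell]=\sum_{t_1<\cdots<t_\ell\leq n}\Pr[T_a=t_1]\prod_{j=2}^{\ell}\Pr[R=t_j-t_{j-1}]\cdot Q_{n-t_\ell}(x-a),
\end{align*}
where $T_a:=\inf\{k\geq1:S_k=a\}$, $R$ is the first return time to $0$, and $Q_m(y):=\Pr[S_m=y,\,S_k\neq0\text{ for }1\leq k\leq m-1]$ is the taboo transition avoiding the origin. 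This is exactly the kind of identity that the referenced ``Lemma: Path Transformation'' is designed to furnish, and the degenerate cases $a=0$ (no first-passage piece) and $x=a$ (forcing $t_\ell=n$ and dropping the final factor) slot into the same framework.

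Next I would feed sharp one-dimensional local asymptotics into each factor in the scaling regime $t_j\asymp n$ with $|a|,|x-a|\in\{0\}\cup[\ka\sqrt n,\infty)$ and $\ell\asymp\sqrt n$. Gnedenko's local CLT supplies $\Pr[S_t=0]\sim1/\sqrt{2\pi\nu t}$; a first-passage local limit of Kesten--Uchiyama type gives $\Pr[T_a=t]\sim|a|\ex^{-a^2/2\nu t}/\sqrt{2\pi\nu t^3}$, and an analogous reflection/ballot argument handles $Q_m(y)$; and a Kac-type computation combined with the local CLT yields $\Pr[R=t]\sim\nu/\sqrt{2\pi\nu t^3}$, placing $R$ in the domain of attraction of a one-sided $1/2$-stable law. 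Substituting these estimates into the convolution identity and interpreting the sum as a Riemann integral over $s_j=t_j/n\in[0,1]$, the Gaussian factors combine, the $\ell-1$ return-time increments aggregate through a stable local limit theorem, and the entire expression rearranges into $n^{-1}\phi_\nu(a/\sqrt n,x/\sqrt n,\ell/\sqrt n)$, matching the classical joint density of $(B_1^\nu,L^a)$. Formula \eqref{Equation: Main Theorem 2} then follows from \eqref{Equation: Main Theorem 1} by summing over $x$ (separating $x=a$ from $|x-a|\geq\ka\sqrt n$) and recognizing the integral of $\phi_\nu$ in its second variable as $\psi_\nu$.

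The principal technical obstacle is to establish, with uniformity in $\ell\geq\ka\sqrt n$, a local limit theorem for the sum $R_1+\cdots+R_{\ell-1}$ of i.i.d.\ heavy-tailed return times: since $\Ex[R]=\infty$ and $\Pr[R>t]\sim\sqrt{2\nu/\pi t}$, Gnedenko's theorem does not apply, and one must instead combine a stable-law local CLT (e.g.\ of Stone or Ibragimov--Linnik type) with quantitative estimates on the characteristic function of $R$ obtained from standard Spitzer--Kac renewal identities. A secondary difficulty is preserving uniformity of the Riemann-sum approximation as $\ell$ traverses the entire interval $[\ka\sqrt n,\infty)$; I expect to control this by splitting the sum into a bulk part $t_j\in[\eps n,(1-\eps)n]$, on which all local limits are uniform, and a negligible contribution from the boundary handled via Chebyshev-type tail bounds.
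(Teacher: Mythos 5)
Your proposal follows the paper's strategy almost exactly: the same three-part path decomposition (first passage to $a$, a sum of $\ell-1$ i.i.d.\ return times to the origin, and a final taboo segment equal in law to a first passage by the Kaigh duality $\Pr[S_m=y,\tau_0>m]=\Pr[\tau_y=m]$); the same local inputs (Kaigh's first-passage local limit for $\tau_a$, Kemperman's tail asymptotic $\Pr[\tau_0>t]\sim\sqrt{2\nu/\pi t}$ placing $\tau_0$ in the $1/2$-stable domain, and the Gnedenko--Kolmogorov stable local CLT for $\Om_{\ell-1}=\sum_{j<\ell}\om_j$ with uniformity in $\ell\geq\ka\sqrt n$ extracted by scaling); and the same bulk-plus-boundary splitting of the resulting convolution sum, with the boundary controlled by $\erfc$ tail estimates for $\tau_a$ rather than a literal Chebyshev bound (note $\Ex[\tau_a]=\infty$, so a moment bound is not available here; the paper instead uses Kaigh's quantitative tail for $\tau_a$ at scale $n\De$). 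You correctly identify the uniform stable local CLT as the central technical ingredient.

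The one genuine gap is your derivation of \eqref{Equation: Main Theorem 2} from \eqref{Equation: Main Theorem 1} by summing over $x$. Since \eqref{Equation: Main Theorem 1} is uniform only for $x=a$ or $|x-a|\geq\ka\sqrt n$, summing it over $x$ leaves the range $0<|x-a|<\ka\sqrt n$ uncontrolled. That range contains $\asymp\ka\sqrt n$ lattice points each contributing $O(1/n)$ to $\Pr[\La^a_n=\ell]$, hence its contribution to $\sqrt n\,\Pr[\La^a_n=\ell]$ is a priori of order $\ka$, not $o(1)$ as $n\to\infty$ for fixed $\ka$. To close this you would need either a $\ka$-uniform version of \eqref{Equation: Main Theorem 1}, or a separate estimate showing the excluded strip contributes $O(\ka)$ uniformly in $n$ before letting $\ka\downarrow0$; neither is automatic. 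The paper avoids the issue entirely by proving \eqref{Equation: Main Theorem 2} directly through the same convolution identity, with the last factor $\Pr[\tau_{x-a}=n-k-m]$ replaced by $\Pr[\tau_0>n-k-m]$. The different decay rate of this factor ($O(m^{-1/2})$ instead of $O(m^{-1})$) is precisely what converts the $n$-normalization of \eqref{Equation: Main Theorem 1} into the $\sqrt n$-normalization of \eqref{Equation: Main Theorem 2} without ever summing over $x$.

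Two smaller remarks. To invoke the lattice stable local CLT for $\Om_{\ell-1}$ you must also check that $\ZZ$ is the maximal lattice for $\tau_0$; this follows from the aperiodicity of the walk and is needed for the argument, even if routine. And while the leading-order formulas you write down for $\Pr[T_a=t]$, $\Pr[R=t]$, and the limiting integral identity $\int_\RR\phi_\nu\,\dd x=\psi_\nu$ are all correct, the real content is the uniformity (in $a$, $x$, $\ell$ jointly) at each step, which your sketch acknowledges but does not resolve beyond the bulk/boundary heuristic.
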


It can be noted that \eqref{Equation: Main Theorem 1} is consistent with the joint density
\begin{align*}
\Pr[B^\nu_1\in\dd x,L^a\in\dd \ell]
=\phi_\nu(a,x,\ell)\d x\dd\ell,\qquad x,a\in\RR,~\ell>0
\end{align*}
(see \cite[Page 6]{Bo2} and \cite{Pi}).
As for \eqref{Equation: Main Theorem 2}, by integrating the above with respect to $x$, one easily infers that
\[\Pr[L^a\in\dd \ell]=\psi_\nu(a,\ell)\d\ell,\qquad a\in\RR,~\ell>0.\]

Any statement of the form \eqref{Equation: Main Theorem 1} appears to be new. As for
\eqref{Equation: Main Theorem 2}, our statement can essentially be obtained from \cite[Theorem 1.1]{Bo1},
though there are notable differences in the assumptions we make in this note; for example, in \cite{Bo1}, $X$
is assumed to have finite moments of all orders, whereas we assume only a finite second moment.
Our motivation for including \eqref{Equation: Main Theorem 2} is that, by combining it with
\eqref{Equation: Main Theorem 1}, we obtain a local limit theorem for $S_n$ conditioned on its local time process.

\section{Similar Results}

There are two results in the literature which can be thought of as the closest analogues of Theorem \ref{Theorem: Main}.
The results in question concerns local limit theorems for the probabilities of the form $\Pr[S_n=x,\La^a_n=\ell]$ in the case where $\ell=0$.
In order to state these results, we need the following notation.

\begin{notation}
Let us denote the first hitting times
\[T_a:=\inf\{t>0:B^\nu_t=a\}
\quad\text{and}\quad
\tau_a:=\min\{n\geq1:S_n=a\},\qquad a\in\RR\]
so that $\{L^a=0\}=\{T_a>1\}$ and $\{\La^a_n=0\}=\{\tau_a>n\}$.
We denote the Gaussian density by
\[\ga_\nu(x):=\frac{\ex^{-x^2/2\nu}}{\sqrt{2\pi\nu}},\qquad x\in\RR.\]
\end{notation}

The first result is due to Kaigh \cite[Theorem 1, (9) and Section 4]{Ka}:

\begin{theorem}[Kaigh]\label{Theorem: Hitting Away Local}
Suppose that Assumption \ref{Assumption: Centered Lattice} holds.
For every $a\in\ZZ$ and $n\in\NN$, it holds that 
\[\Pr[S_n=a,\tau_0>n]=\Pr[\tau_a=n].\]
Moreover,
\begin{align}\label{Equation: Hitting Away Local}
\lim_{n\to\infty}\sup_{a\in\ZZ}\left|n\Pr[S_n=a,\tau_0>n]-\frac{|a|\ex^{-a^2/2\nu n}}{\sqrt{2\pi \nu n}}\right|&=0,\\
\label{Equation: Hitting Away Local Conditional}
\lim_{n\to\infty}\sup_{a\in\ZZ}\left|\sqrt{n}\Pr[S_n=a|\tau_0>n]-\frac{|a|\ex^{-a^2/2\nu n}}{2\nu\sqrt{n}}\right|&=0.
\end{align}
\end{theorem}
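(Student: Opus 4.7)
My plan for the identity $\Pr[S_n=a,\tau_0>n]=\Pr[\tau_a=n]$ is a time-reversal argument. I would introduce the reversed walk $\hat S_k:=S_n-S_{n-k}$ for $0\le k\le n$, observe that its increments $\hat X_k=X_{n-k+1}$ are simply a permutation of the i.i.d.\ sequence $(X_1,\ldots,X_n)$ so that $(\hat S_k)_{0\le k\le n}\stackrel{d}{=}(S_k)_{0\le k\le n}$, and check that on $\{S_n=a\}$ with $a\ne 0$ the equivalence $S_k=0\Leftrightarrow\hat S_{n-k}=a$ converts $\{\tau_0>n\}$ into $\{\hat S_j\ne a,\ 0\le j\le n-1\}=\{\hat\tau_a=n\}$; the case $a=0$ is degenerate (the event on the left is empty) and does not affect the asymptotic statement.

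For \eqref{Equation: Hitting Away Local}, the plan is to route the problem through an asymptotic Kemperman-type relation
\[n\Pr[\tau_a=n]\;=\;|a|\,\Pr[S_n=a]+o(1)\qquad\text{uniformly in }a\in\ZZ,\]
after which Gnedenko's local CLT (Theorem \ref{Theorem: Local CLT}) applied to $\Pr[S_n=a]$ immediately produces the claim. To prove the Kemperman-type relation I would start from the renewal decomposition $\Pr[S_n=a]=\sum_{k=1}^n\Pr[\tau_a=k]\,\Pr[S_{n-k}=0]$ (a consequence of the strong Markov property at $\tau_a$), pass to generating functions to obtain the product identity $\sum_{n\ge 1}\Pr[S_n=a]z^n=F_a(z)\,U_0(z)$, and invert using the square-root singularity $U_0(z)\sim c/\sqrt{1-z}$ near $z=1$ (itself a consequence of $\Pr[S_n=0]\sim1/\sqrt{2\pi\nu n}$ from Theorem \ref{Theorem: Local CLT}) together with a Tauberian theorem for series with non-negative coefficients.

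For the conditional statement \eqref{Equation: Hitting Away Local Conditional}, I would sum \eqref{Equation: Hitting Away Local} over $a\in\ZZ$ to extract
\[\Pr[\tau_0>n]=\sum_{a\ne 0}\Pr[S_n=a,\tau_0>n]\;\sim\;\sqrt{\tfrac{2\nu}{\pi n}},\]
the key Gaussian-integral input being $\int_\RR|x|\ex^{-x^2/2\nu}\,\dd x=2\nu$, and then divide. The main obstacle I anticipate is the uniform-in-$a$ control of the Kemperman-type relation: in the regime $|a|\asymp\sqrt n$ both sides are of order~$1$, so the $o(1)$ reflects a genuine cancellation that must be tracked through the generating-function inversion, whereas in the tail $|a|\gg\sqrt n$ one needs uniform decay of $\Pr[\tau_a=n]$ under only a finite-second-moment hypothesis. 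Controlling these two regimes simultaneously -- and then justifying the interchange of sum and limit in the Riemann-sum step of the conditional argument with a uniform dominating bound -- is where the bulk of the work will lie.
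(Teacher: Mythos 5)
This theorem is cited, not proved, in the paper: it is imported wholesale from Kaigh's 1975 paper, and the only piece the present paper supplies is the observation that \eqref{Equation: Hitting Away Local Conditional} follows from \eqref{Equation: Hitting Away Local} upon dividing by the classical asymptotic $\sqrt n\,\Pr[\tau_0>n]\to\sqrt{2\nu/\pi}$, which is itself quoted from Kaigh. Your time-reversal derivation of the exact identity $\Pr[S_n=a,\tau_0>n]=\Pr[\tau_a=n]$ is correct for $a\neq0$ (and the $a=0$ case is vacuous on the left, as you note); this is essentially the path representation (Kaigh's (15)) that the paper's Remark singles out as the mechanism behind the theorem, so you are on the same track there.

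The gap is in \eqref{Equation: Hitting Away Local} itself, whose entire content is the \emph{uniformity} in $a\in\ZZ$. The Kemperman-type relation $n\,\Pr[\tau_a=n]=|a|\,\Pr[S_n=a]+o(1)$ uniformly over $a\in\ZZ$ is at least as hard as the theorem it is meant to prove, and the generating-function route you sketch does not deliver it: the factorization $\sum_n\Pr[S_n=a]z^n=F_a(z)\,U_0(z)$ plus a $z\to1$ singularity/Tauberian analysis produces the $n^{-3/2}$ decay of $\Pr[\tau_a=n]$ for each \emph{fixed} $a$, but yields no control that is uniform as $a$ varies, and Tauberian theorems are notoriously silent on error rates. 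Moreover, even granting the Kemperman relation, Theorem \ref{Theorem: Local CLT} only gives $|a|\Pr[S_n=a]=|a|\,\ga_\nu(a/\sqrt n)/\sqrt n+|a|\cdot o(n^{-1/2})$, and the error $|a|\cdot o(n^{-1/2})$ is not $o(1)$ once $|a|\gg\sqrt n$; under a bare finite-second-moment hypothesis you do not get the large-$|a|$ tail estimates on $\Pr[S_n=a]$ or $\Pr[\tau_a=n]$ needed to close that regime. You flag this yourself, but flagging it does not close it, and the uniformity is exactly what the paper later consumes (Lemma \ref{Lemma: Tail Bounds} and Step 2 of Section \ref{Section: Proof}). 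One smaller point: for \eqref{Equation: Hitting Away Local Conditional} there is no need to sum \eqref{Equation: Hitting Away Local} over $a$ and wrestle with domination; it is cleaner to quote the $\Pr[\tau_0>n]$ asymptotic directly, which is what the paper does.
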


The function appearing in \eqref{Equation: Hitting Away Local Conditional} is the
two-sided Rayleigh density,
which is the endpoint distribution of a two-sided Brownian meander:
\[\Pr[B^\nu_1\in\dd x|T_0>1]=\frac{|x|\ex^{-x^2/2\nu}}{2\nu}\d x,\qquad x\in\RR.\]
We note that the main result of \cite{Ka} is \eqref{Equation: Hitting Away Local},
and that \eqref{Equation: Hitting Away Local Conditional} can be obtained from
the former when combined with the well-known asymptotic (c.f., \cite[(1)]{Ka})
\[\lim_{n\to\infty}\sqrt{n}\Pr[\tau_0>n]=\sqrt{2\nu/\pi}.\]

The second result we mention is a special case of a more general statement due to Uchiyama \cite[Theorem 1.1]{U1}:

\begin{theorem}[Uchiyama]\label{Theorem: Uchiyama}
Suppose that Assumption \ref{Assumption: Centered Lattice} holds.
For every $\ka>0$, it holds that
\begin{align}\label{Equation: Avoiding Away 1}
\sqrt{n}\Pr[S_n=x,\La^a_n=0]
=\ga_\nu\left(\frac{x}{\sqrt{n}}\right)-\ga_\nu\left(\frac{2a-x}{\sqrt{n}}\right)+o(1)
\end{align}
as $n\to\infty$, where the error term $o(1)$ is uniform in $x,a\in\ZZ$ such that
\begin{enumerate}
\item $a\geq\ka \sqrt{n}$ and $x\leq a$; or
\item $a\leq-\ka \sqrt{n}$ and $x\geq a$.
\end{enumerate}
\end{theorem}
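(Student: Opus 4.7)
The plan is a reflection-type argument carried out through a path decomposition at the first hitting time $\tau_a$. Since $\{\Lambda^a_n=0\}=\{\tau_a>n\}$, the strong Markov property at $\tau_a$ yields
\[\Pr[S_n=x,\Lambda^a_n=0]=\Pr[S_n=x]-\sum_{k=1}^{n}\Pr[\tau_a=k]\,\Pr[S_{n-k}=x-a].\]
Gnedenko's theorem (Theorem~\ref{Theorem: Local CLT}) handles the first term uniformly as $\sqrt n\,\Pr[S_n=x]=\gamma_\nu(x/\sqrt n)+o(1)$.

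For the sum, I would feed in Kaigh's identity $\Pr[\tau_a=k]=\Pr[S_k=a,\tau_0>k]$ together with \eqref{Equation: Hitting Away Local} to obtain the first-passage asymptotic
\[k\,\Pr[\tau_a=k]=\frac{|a|\,\ex^{-a^2/2\nu k}}{\sqrt{2\pi\nu k}}+o(1)\]
uniformly in $a$, and apply Gnedenko's theorem a second time to $\Pr[S_{n-k}=x-a]$. Under the substitution $k=sn$, multiplying through by $\sqrt n$ turns the sum into a Riemann sum for
\[\int_0^1\frac{|a|/\sqrt n}{\sqrt{2\pi\nu s^3}}\,\ex^{-(a/\sqrt n)^2/(2\nu s)}\cdot\frac{1}{\sqrt{1-s}}\,\gamma_\nu\!\left(\frac{(x-a)/\sqrt n}{\sqrt{1-s}}\right)\dd s,\]
and the classical Brownian reflection principle identifies this integral with $\gamma_\nu((2a-x)/\sqrt n)$ for $x\leq a$. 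Subtracting yields the expansion in \eqref{Equation: Avoiding Away 1}; the case $a\leq-\ka\sqrt n$, $x\geq a$ follows by applying the result to the random walk $-S_n$.

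The main obstacle will be uniform control of the Riemann sum approximation, since the summand carries an integrable but singular factor $k^{-3/2}(n-k)^{-1/2}$, and the local-limit estimates for $\Pr[\tau_a=k]$ and $\Pr[S_{n-k}=x-a]$ degrade when $k$ or $n-k$ fails to be of order $n$. The hypothesis $|a|\geq\ka\sqrt n$ is essential near $s=0$: the Gaussian factor $\ex^{-a^2/2\nu k}\leq\ex^{-\ka^2 n/2\nu k}$ is then super-polynomially small for $k\ll n$, absorbing the $k^{-3/2}$ singularity. Near $s=1$, one would truncate the sum at $k=(1-\de)n$, estimate the bulk via uniform continuity of the Gaussian integrand, and bound the remaining tail using the crude estimate $\Pr[S_{n-k}=x-a]\leq C(n-k)^{-1/2}$ combined with Kaigh's asymptotic; letting $\de\downarrow0$ after $n\to\infty$ closes the argument.
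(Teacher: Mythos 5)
The paper does not prove Theorem~\ref{Theorem: Uchiyama}; it is quoted from Uchiyama~\cite[Theorem 1.1]{U1}, with only the passing remark (Remark~3.4) that Uchiyama's argument rests on a path transformation relating the constrained probability to unconstrained ones (cf.\ (3.2) of \cite{U1}). So there is no in-paper proof to compare against; what you have produced is a fresh proof sketch, and it should be evaluated on its own.

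Your sketch is sound in outline. The first-passage decomposition
$\Pr[S_n=x,\tau_a>n]=\Pr[S_n=x]-\sum_{k=1}^n\Pr[\tau_a=k]\Pr[S_{n-k}=x-a]$
is the natural analogue of the path decompositions used throughout this paper (Lemma~\ref{Lemma: Path Transformation}), and it reduces everything to unconditioned objects: Gnedenko's local CLT for $\Pr[S_n=x]$ and $\Pr[S_{n-k}=x-a]$, and Kaigh's local limit theorem \eqref{Equation: Hitting Away Local} for $\Pr[\tau_a=k]$ (via the identity $\Pr[\tau_a=k]=\Pr[S_k=a,\tau_0>k]$). Your target integral is correct, and the identification with $\ga_\nu((2a-x)/\sqrt n)$ is precisely the paper's Lemma~\ref{Lemma: Integral 2}: with $\al=|a|/\sqrt n$, $\be=|x-a|/\sqrt n$, one has $\int_0^1\frac{\al\ex^{-\al^2/2\nu s}}{\sqrt{2\pi\nu s^3}}\frac{\ex^{-\be^2/2\nu(1-s)}}{\sqrt{2\pi\nu(1-s)}}\dd s=\ga_\nu(\al+\be)$, and $\al+\be=(2a-x)/\sqrt n$ exactly when $a>0$, $x\le a$. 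The symmetric case by reflection to $-S_n$ is fine, since $\ga_\nu$ is even.

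The issues you flag at the end are real, but the paper's toolkit is built to handle them. Near $s=0$: pointing to the super-exponentially small Gaussian factor controls the limiting integrand, but not the actual discrete probabilities $\Pr[\tau_a=k]$ for $k=O(1)$, where Kaigh's $o(1/k)$ gives no information. What you actually need there is the direct tail bound $\sup_{|a|\ge\ka\sqrt n}\Pr[\tau_a\le\De n]\to\erfc(\ka/\sqrt{2\nu\De})\to 0$ as $\De\to 0$, which is Remark~\ref{Remark: Hitting Time Away from Zero Rescaled} / Kaigh's \eqref{Equation: Kaigh Tail Bound}; that, not the Gaussian decay of the integrand, is what absorbs the near-$s=0$ portion of the sum. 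The uniform Riemann-sum passage in the bulk $[\De,1-\De]$ is Lemma~\ref{Lemma: Uniform Riemann Approximation}, and your treatment of the $s\to 1$ tail (bounding $\Pr[\tau_a=k]\lesssim 1/n$ by Kaigh and summing $\Pr[S_{n-k}=x-a]\lesssim(n-k+1)^{-1/2}$ to get $O(\sqrt\De)$) is exactly right. In short: a correct approach, in the same path-decomposition spirit as the paper's own Lemma~\ref{Lemma: Path Transformation}, with the one refinement being that the $s\approx 0$ region is controlled by a hitting-time tail estimate rather than by the Gaussian factor alone.
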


Equation \eqref{Equation: Avoiding Away 1} is consistent with
\[\Pr[B^\nu_1\in\dd x,T_a>1]=\ga_\nu(x)-\ga_\nu(2a-x)\d x,\]
which holds for $a>0$ and $x\leq a$, or $a<0$ and $x\geq a$.
(This is easily computed by using the joint distribution of $B^\nu_1$ and $\sup\{B^\nu_t:t\in[0,1]\}$.)

\begin{remark}
Theorems \ref{Theorem: Hitting Away Local} and \ref{Theorem: Uchiyama} both rely on path transformations or representations
that relate the probabilities under consideration to probabilities involving an unconditioned/unconstrained
random walk (c.f., \cite[(15)]{Ka} and \cite[(3.2)]{U1}).
\end{remark}

\section{Proof Strategy}\label{Section: Strategy}

We begin with the path decomposition that serves as the basis of our proof. The idea, illustrated in
Figure \ref{Figure: Path Decomposition} below, is to break down a random walk path that visits some level $a$
a total of $\ell$ times before time $n$ into three parts.
\begin{figure}[htbp]
\begin{center}
%
%
\begin{tikzpicture}

\begin{axis}[%
width=4in,
height=2in,
at={(1.011in,0.642in)},
scale only axis,
xmin=0,
xmax=100,
ymin=-4,
ymax=8,
axis background/.style={fill=white}
]
\addplot [color=red, dashed, forget plot]
  table[row sep=crcr]{%
0	4\\
100	4\\
};
\addplot [color=blue, forget plot, thick]
  table[row sep=crcr]{%
0	0\\
1	1\\
2	2\\
3	1\\
4	2\\
5	1\\
6	0\\
7	-1\\
8	-2\\
9	-3\\
10	-4\\
11	-3\\
12	-2\\
13	-1\\
14	0\\
15	-1\\
16	-2\\
17	-1\\
18	-2\\
19	-1\\
20	0\\
21	-1\\
22	0\\
23	-1\\
24	0\\
25	1\\
26	0\\
27	-1\\
28	0\\
29	1\\
30	2\\
31	3\\
32	4\\
33	5\\
34	6\\
35	7\\
36	6\\
37	5\\
38	4\\
39	5\\
40	6\\
41	7\\
42	6\\
43	7\\
44	6\\
45	7\\
46	6\\
47	7\\
48	8\\
49	7\\
50	8\\
51	7\\
52	6\\
53	7\\
54	6\\
55	5\\
56	4\\
57	5\\
58	6\\
59	7\\
60	6\\
61	5\\
62	6\\
63	5\\
64	4\\
65	3\\
66	2\\
67	3\\
68	4\\
69	3\\
70	2\\
71	1\\
72	0\\
73	1\\
74	0\\
75	1\\
76	0\\
77	-1\\
78	0\\
79	1\\
80	0\\
81	1\\
82	2\\
83	3\\
84	2\\
85	1\\
86	0\\
87	-1\\
88	-2\\
89	-3\\
90	-4\\
91	-3\\
92	-4\\
93	-3\\
94	-2\\
95	-3\\
96	-2\\
97	-1\\
98	0\\
99	-1\\
100	-2\\
};
\addplot [color=black, draw=none, mark=x, mark options={solid, black}, forget plot,thick]
  table[row sep=crcr]{%
32	4\\
};
\addplot [color=black, draw=none, mark=x, mark options={solid, black}, forget plot,thick]
  table[row sep=crcr]{%
38	4\\
};
\addplot [color=black, draw=none, mark=x, mark options={solid, black}, forget plot,thick]
  table[row sep=crcr]{%
56	4\\
};
\addplot [color=black, draw=none, mark=x, mark options={solid, black}, forget plot,thick]
  table[row sep=crcr]{%
64	4\\
};
\addplot [color=black, draw=none, mark=x, mark options={solid, black}, forget plot,thick]
  table[row sep=crcr]{%
68	4\\
};
\end{axis}
\draw [thick,decorate,decoration={brace,amplitude=5pt}]
(2.58,5.1) -- (5.8,5.1) node [black,midway,yshift=13]
{Part 1};
\draw [thick,decorate,decoration={brace,mirror,amplitude=10pt}]
(5.8,4.9) -- (9.47,4.9) node [black,midway,yshift=-18]
{Part 2};
\draw [thick,decorate,decoration={brace,amplitude=5pt}]
(9.47,5.1) -- (12.715,5.1) node [black,midway,yshift=13]
{Part 3};
\end{tikzpicture}%
\caption{Path Decomposition for $n=100$, $a=4$, and $\ell=5$.}
\label{Figure: Path Decomposition}
\end{center}
\end{figure}
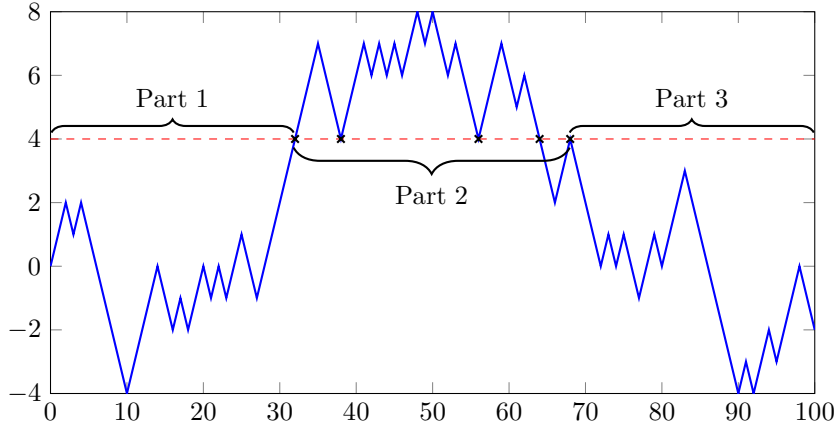
The first part consists of the steps before the first visit to level $a$
(this is vacuous if $a=0$).
If $x\neq a$, these steps are counted by the first hitting time $\tau_a$.
The second part consists of the $\ell-1$ excursions of the random walk
between consecutive visits to level $a$.
The number of such steps is equal in distribution to a sum of $\ell-1$ i.i.d. copies of $\tau_0$.
The third part consists of the steps after the last visit to level $a$ (this is vacuous if $x=a$).
If $x\neq a$, then this last part behaves like a random walk
that avoids returning to zero. More precisely, we have the following statement.

\begin{lemma}\label{Lemma: Path Transformation}
Let $(\om_k)_{k\in\NN}$ be i.i.d. copies of the hitting time $\tau_0$. For evety $u\in\NN$,
let $\Om_u:=\om_1+\cdots+\om_u$.
For every $a,x\in\ZZ$ and $\ell\in\NN$, we have that
\begin{multline*}
\Pr[S_n=x,\La^a_n=\ell]\\
=\begin{cases}
\displaystyle
\sum_{k=1}^n\Pr[\tau_a=k]\sum_{m=1}^{n-k}\Pr[\Om_{\ell-1}=m]\Pr[\tau_{x-a}=n-k-m]&\text{if }a\neq0,~a\neq x\\
\displaystyle
\sum_{m=1}^{n}\Pr[\Om_\ell=m]\Pr[\tau_x=n-m]&\text{if }a=0,~a\neq x\\
\displaystyle
\sum_{k=1}^n\Pr[\tau_a=k]\Pr[\Om_{\ell-1}=n-k]&\text{if }a\neq0,~a= x\\
\displaystyle
\sum_{k=1}^n\Pr[\Om_\ell=n]&\text{if }a=0,~a=x
\end{cases}
\end{multline*}
and
\[\Pr[\La^a_n=\ell]
=\begin{cases}
\displaystyle
\sum_{k=1}^n\Pr[\tau_a=k]\sum_{m=1}^{n-k}\Pr[\Om_{\ell-1}=m]\Pr[\tau_{0}>n-k-m]&\text{if }a\neq0\\
\displaystyle
\sum_{m=1}^{n}\Pr[\Om_\ell=m]\Pr[\tau_0>n-m]&\text{if }a=0.
\end{cases}\]
\end{lemma}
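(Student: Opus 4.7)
The plan is to execute the path decomposition illustrated in Figure \ref{Figure: Path Decomposition}. Fix $a, x \in \ZZ$ and $\ell \in \NN$, and on the event $\{\La^a_n = \ell, S_n = x\}$ enumerate the $\ell$ visits to level $a$ within $\{1, \ldots, n\}$ as $1 \leq k_1 < k_2 < \cdots < k_\ell \leq n$. The three parts of the decomposition correspond to (i) the steps up to time $k_1$, (ii) the excursions away from $a$ occurring between consecutive visits (so filling times $k_1$ through $k_\ell$), and (iii) the steps after $k_\ell$.

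First I would handle part (i) by conditioning on $k_1$: when $a \neq 0$, this is the first hitting time $\tau_a$, contributing a factor $\Pr[\tau_a = k]$, while when $a = 0$ the quantity $k_1$ is already accounted for by the first excursion of part (ii). For part (ii), the strong Markov property applied successively at $k_1, k_2, \ldots$ shows that the inter-visit gaps $k_{i+1} - k_i$ are i.i.d.\ copies of $\tau_0$; hence the total elapsed time $k_\ell - k_1$ has the law of $\Om_{\ell-1}$ (and $k_\ell = \Om_\ell$ when $a = 0$). For part (iii), a final application of the strong Markov property at $k_\ell$ shows that $(S_{k_\ell + j} - a)_{j \geq 0}$ is an independent copy of the original walk; under this coupling, ``no further visits to $a$ in $(k_\ell, n]$'' and ``$S_n = x$'' translate to ``$\tau_0 > n - k_\ell$ and this fresh walk ends at $x - a$''. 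Summing over $k_1 = k$ and $k_\ell - k_1 = m$, and treating the degenerate cases ($k_1 = 0$ when $a = 0$; part (iii) vacuous when $x = a$) separately, gives the four cases of the first identity.

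The only non-elementary input arises in part (iii): when $x \neq a$, the probability $\Pr[S_{n-k-m} = x - a,\ \tau_0 > n - k - m]$ must be recognised as $\Pr[\tau_{x-a} = n - k - m]$, which is precisely Kaigh's time-reversal identity already invoked in Theorem \ref{Theorem: Hitting Away Local}. For the formula for $\Pr[\La^a_n = \ell]$, the identical decomposition applies with the terminal constraint $S_n = x$ dropped, so that the third segment contributes $\Pr[\tau_0 > n - k - m]$ directly and no appeal to Kaigh's identity is needed. I expect the main source of difficulty in writing this up to be the careful bookkeeping of the four boundary cases rather than any analytic subtlety, since the strong Markov property does essentially all the work.
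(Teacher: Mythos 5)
Your proposal is correct and follows essentially the same route as the paper: iterate the strong Markov property at the successive visit times to level $a$ (first hitting $\Pr[\tau_a=k]$, then $\ell-1$ i.i.d.\ return gaps giving $\Om_{\ell-1}$, then a final $a$-avoiding segment), and convert the final segment via Kaigh's identity $\Pr[S_u=y,\tau_0>u]=\Pr[\tau_y=u]$. The only difference is cosmetic: you enumerate the visit times $k_1<\cdots<k_\ell$ up front, whereas the paper peels them off one at a time.
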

\begin{proof}
We focus on the case where $a\neq0$ and $a\neq x$, since the other cases can be proved
with a simpler version of the same argument. According to the strong Markov property,
\begin{multline*}
\Pr[S_n=x,\La^a_n=\ell]
=\sum_{k=1}^n\Pr[\tau_a=k,S_n=x,\La_n^a=\ell]\\
=\sum_{k=1}^n\Pr[\tau_a=k]\Pr[S_{n-k}=x-a,\La_{n-k}^0=\ell-1],
\end{multline*}
and similarly
\[\Pr[\La^a_n=\ell]=\sum_{k=1}^n\Pr[\tau_a=k]\Pr[\La_{n-k}^0=\ell-1].\]
By repeating the same procedure with $\ell-1$ successive return times to zero, we then obtain that
\[\Pr[S_{n-k}=x-a,\La_{n-k}^0=\ell-1]
=\sum_{m=1}^{n-k}\Pr[\Om_{\ell-1}=m]\Pr[S_{n-k-m}=x-a,\tau_0>n-k-m]\]
and
\[\Pr[\La_{n-k}^0=\ell-1]
=\sum_{m=1}^{n-k}\Pr[\Om_{\ell-1}=m]\Pr[\tau_0>n-k-m].\]
The result then follows from the fact that
\[\Pr[S_u=y,\tau_0>u]=\Pr[\tau_y=u],\qquad y\in\ZZ,~u\in\NN,\]
as stated in Theorem \ref{Theorem: Hitting Away Local}.
\end{proof}

The proof of Theorem \ref{Theorem: Main} is thus mostly reduced to a careful analysis of the distributions
of $\tau_a$ and $\Om_\ell$. We end this section by stating several results in this
direction that are needed in the proof of Theorem \ref{Theorem: Main}. The first such result concerns local
limits for $\Om_\ell$.

\begin{lemma}\label{Lemma: Omega Local Limit}
It holds that
\begin{align}
\lim_{n\to\infty}\sup_{\substack{m,\ell\in\NN\\\ell\geq\ka\sqrt{n}}}
\left|n\Pr[\Om_{\ell-1}=m]-\frac{(\nu\ell)\ex^{-(\nu\ell)^2/2\nu n(m/n)}}{\sqrt{2\pi\nu n (m/n)^3}}\right|&=0.
\label{Equation: Omega Local Limit}
\end{align}
\end{lemma}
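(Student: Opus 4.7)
The plan is to view \eqref{Equation: Omega Local Limit} as a local limit theorem for sums of $\ell-1$ i.i.d.\ copies of $\tau_0$ in the domain of attraction of a positive stable-$1/2$ law. The asymptotic $\sqrt{n}\Pr[\tau_0>n]\to\sqrt{2\nu/\pi}$ quoted just after Theorem~\ref{Theorem: Hitting Away Local} places $\tau_0$ in this domain, and a routine algebraic simplification, using $(\nu\ell)^2/\bigl(2\nu n(m/n)\bigr)=\nu\ell^2/(2m)$ and $n(m/n)^3=m^3/n^2$, shows that the right-hand side of \eqref{Equation: Omega Local Limit} is exactly $n$ times the density at time $m$ of the first passage time of $B^\nu$ to the level $\nu\ell$---which is the expected stable-$1/2$ limit under the critical scaling $m\asymp\ell^2\asymp n$.

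I would proceed by Fourier inversion on $\ZZ$. From the tail asymptotic $\Pr[\tau_0>n]\sim\sqrt{2\nu/\pi n}$, an Abelian/Tauberian computation yields the small-frequency expansion
\[\Ex\!\bigl[\ex^{\im t\tau_0}\bigr]=1-\sqrt{\nu|t|}\,\bigl(1-\im\,\mathrm{sign}(t)\bigr)+o\bigl(\sqrt{|t|}\bigr),\qquad t\to0,\]
equivalent to $\Ex[\ex^{-s\tau_0}]=1-\sqrt{2\nu s}+o(\sqrt{s})$ as $s\to0^+$. Raising to the $(\ell-1)$-th power, one sees that under the change of variables $t=u/\ell^2$ the characteristic function of $\Om_{\ell-1}$ converges to the stable-$1/2$ characteristic function whose Laplace transform is $\ex^{-\ell\sqrt{2\nu s}}$. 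Inverting
\[\Pr[\Om_{\ell-1}=m]=\frac{1}{2\pi}\int_{-\pi}^{\pi}\ex^{-\im tm}\,\Ex\!\bigl[\ex^{\im t\tau_0}\bigr]^{\ell-1}\d t\]
on the window $|t|\leq\de$ produces the target density in the limit; on the complement, aperiodicity of $\tau_0$ (which follows from Assumption~\ref{Assumption: Centered Lattice}) gives $|\Ex[\ex^{\im t\tau_0}]|\leq 1-\eta$ for $\de\leq|t|\leq\pi$, so that contribution is at most $(1-\eta)^{\ell-1}$ and is absorbed into the $o(1/n)$ error once $\ell\geq\ka\sqrt{n}$.

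The hardest part will be quantifying the small-$t$ expansion with a uniform error bound that, after being propagated through the $(\ell-1)$-fold power and integrated, produces a genuine $o(1/n)$ correction uniformly in $m\in\NN$ and $\ell\geq\ka\sqrt{n}$. A related difficulty arises when $m$ lies far from the critical scale $\ell^2$: the target density is then either exponentially small in $\nu\ell^2/(2m)$ (for $m\ll\ell^2$) or polynomially small in $n/m^{3/2}$ (for $m\gg\ell^2$), and matching $\Pr[\Om_{\ell-1}=m]$ in these tail regimes will likely require supplementary estimates, for example contour deformation in the Laplace inversion or direct large-deviation bounds on $\Om_{\ell-1}$ obtained from the heavy-tail behavior of $\tau_0$.
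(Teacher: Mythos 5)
Your plan is sound and goes through the same conceptual door as the paper: place $\tau_0$ in the domain of attraction of the one-sided stable-$1/2$ (L\'evy) law via the tail asymptotic $\Pr[\tau_0>n]\sim\sqrt{2\nu/\pi n}$, note that $\tau_0$ has maximal span $1$ (the aperiodicity point you raise), and then deduce a local limit theorem for $\Om_{\ell-1}$ uniform in $m$. The difference is in how that last step is executed. You propose to carry out the Fourier inversion yourself: derive the small-$t$ expansion $\Ex[\ex^{\im t\tau_0}]=1-\sqrt{\nu|t|}(1-\im\,\mathrm{sign}(t))+o(\sqrt{|t|})$, raise to the $(\ell-1)$-th power, split the inversion integral into a main window and a tail controlled by $|\Ex[\ex^{\im t\tau_0}]|\leq 1-\eta$, and then chase uniform error bounds. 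You correctly identify that making those bounds uniform in $m$ (especially off the critical scale $m\asymp\ell^2$) is where the real work lies. The paper avoids all of that by citing the classical local limit theorem for lattice variables in the domain of attraction of a stable law (\cite[Theorem in \S50]{GnKo}), which asserts precisely the uniform-in-$m$ statement
\[
u^2\Pr[\Om_u=m]=\frac{u^3\sqrt{\nu}\,\ex^{-\nu u^2/2m}}{\sqrt{2\pi m^3}}+o(1)
\qquad\text{as }u\to\infty,
\]
once one checks that $\ZZ$ is maximal for $\tau_0$ (which follows from aperiodicity of $S_n$, as you observe). The remainder of the paper's proof is then pure algebra: set $\ka_{\ell,n}=(\ell-1)/\sqrt{n}\geq\ka$ and rescale. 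So what you are proposing is essentially a self-contained reproof of that classical theorem; it would work if done carefully, but the "hardest part" you flag is exactly the content of the theorem being cited, and citing it is the cleaner move. Nothing in your sketch is wrong, but before investing in the quantitative Fourier estimates, you should recognize that they amount to reproving Gnedenko's stable local limit theorem.
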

\begin{proof}
According to \cite[Equation (10.6)]{Ke},
\begin{align}\label{Equation: Hitting Zero Deviation}
1-\Pr[\tau_0\leq n]\sim\frac{n^{-1/2}\sqrt{2\nu}}{\Gamma(1/2)}=\frac{\sqrt{2\nu}}{\sqrt{\pi n}}
\qquad\text{as }n\to\infty.
\end{align}
Therefore, it follows from \cite[Section XIII.6]{Fe}
that $\tau_0$ is in the domain of attraction of the
distribution with Laplace transform $\ex^{-\sqrt{t}}$ ($t\geq0$)
and with normalization sequence $2\nu n^2$ ($n\in\NN$). It is easy to see that the L\'evy distribution
satisfies this condition:
\[\int_0^\infty\frac{\ex^{-1/4y}}{\sqrt{4\pi y^3}}\cdot\ex^{-t y}\d y=\ex^{-\sqrt{t}},\qquad t\geq0,\]
and thus a straightforward change of variables implies that
\[\lim_{u\to\infty}\Pr\left[\frac{\Om_u}{u^2}\in\dd y\right]=\frac{\sqrt{\nu}\ex^{-\nu/2y}}{\sqrt{2\pi y^3}}\d y.\]

Clearly, $\tau_0$ is supported on the lattice $\ZZ$. In fact, $\ZZ$ is maximal for
$\tau_0$: If $\tau_0$ is supported on $m\ZZ$ for some $m>1$, then
$\Pr[S_n=0]>0$ only if $n$ is a multiple of $m$, which contradicts that $S_n$ is aperiodic.

By combining the above convergence in distribution with the fact that $\ZZ$ is maximal for $\tau_0$,
we conclude from the general local limit theorem for
stable distributions (c.f., \cite[Theorem in \S50]{GnKo}) that
\begin{align}\label{Equation: Omega Local Limit 0}
u^2\Pr[\Om_u=m]=\frac{u^3\sqrt{\nu}\ex^{-\nu u^2/2m}}{\sqrt{2\pi m^3}}+o(1)\qquad\text{as }u\to\infty,
\end{align}
with $o(1)$ uniform in $m\in\NN$. If we let $\ka_{\ell,n}:=(\ell-1)/\sqrt{n}\geq\ka+n^{-1/2}$, then
it follows from \eqref{Equation: Omega Local Limit 0} that
\begin{align*}
n\Pr[\Om_{\ell-1}=m]&=(\ka_{\ell,n}\sqrt{n})^2\Pr[\Om_{\ka_{\ell,n}\sqrt{n}}=m]/\ka_{\ell,n}^2\\
&=\frac{\ka_{\ell,n}n^{3/2}\sqrt{\nu}\ex^{-\nu (\ka_{\ell,n}\sqrt{n})^2/2m}}{\sqrt{2\pi m^3}}+o(1)\\
&=\frac{(\nu\ell) \ex^{-(\nu\ell)^2/2\nu n(m/n)}}{\sqrt{2\pi\nu n(m/n)^3}}+o(1),
\end{align*}
with $o(1)$ uniform in $m\in\NN$.
\end{proof}

We also need the following uniform Riemann sum approximation result.\footnote{We note that \cite[Lemma 2.21]{KaT}
is stated only for $f:[c_1,c_2]\times\RR^d\to\RR$ with $d=1$, and without the supremum over
$\al$. However, a trivial modification of their argument gives the result stated here.}

\begin{lemma}[{\cite[Lemma 2.21]{KaT}}]\label{Lemma: Uniform Riemann Approximation}
Let $f:[c_1,c_2]\times\RR^d\to\RR$ ($d\in\NN$) be a continuous function,
where $c_1<c_2\in\RR$. If
\[\lim_{\|y\|\to\infty}\sup_{c_1\leq u\leq c_2}|f(u,y)|=0,\]
then
\[\lim_{n\to\infty}\sup_{\substack{y\in\RR^d\\c_1\leq\al\leq c_2}}
\left|\frac1n\sum_{k=\lfloor nc_1\rfloor}^{\lfloor n\al\rfloor-1}
f(k/n,y)-\int_{c_1}^\al f(u,y)\d u\right|=0.\]
\end{lemma}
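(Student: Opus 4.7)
The plan is to split the domain of $y$ into a ``near'' region (bounded $\|y\|$, where I appeal to uniform continuity on a compact set) and a ``far'' region (large $\|y\|$, where the decay assumption makes both the Riemann sum and the integral individually small). Given $\eps>0$, I would first use the hypothesis $\lim_{\|y\|\to\infty}\sup_{c_1\leq u\leq c_2}|f(u,y)|=0$ to select $R>0$ such that $|f(u,y)|<\eps/(c_2-c_1+1)$ whenever $\|y\|\geq R$ and $u\in[c_1,c_2]$. For such $y$, both
\[
\left|\tfrac1n\sum_{k=\lfloor nc_1\rfloor}^{\lfloor n\al\rfloor-1}f(k/n,y)\right|
\quad\text{and}\quad
\left|\int_{c_1}^\al f(u,y)\d u\right|
\]
are bounded by $\eps$ uniformly in $\al\in[c_1,c_2]$ and $n$, so the claim is automatic there.

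On the remaining region $\|y\|\leq R$, the function $f$ is continuous on the compact set $K:=[c_1,c_2]\times\overline{B_R(0)}$, hence uniformly continuous there and bounded by some $M<\infty$. For any $\eta>0$ I would choose $n$ large enough that the modulus of continuity of $f|_K$ at scale $1/n$ is smaller than $\eta$. This yields, pointwise in $y\in\overline{B_R(0)}$ and uniformly in $\al$,
\[
\left|\tfrac1n\sum_{k=\lfloor nc_1\rfloor}^{\lfloor n\al\rfloor-1}f(k/n,y)-\int_{\lfloor nc_1\rfloor/n}^{\lfloor n\al\rfloor/n}f(u,y)\d u\right|\leq\eta(c_2-c_1),
\]
by a standard midpoint-style comparison on each subinterval of length $1/n$.

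It remains to replace the integration limits $\lfloor nc_1\rfloor/n$ and $\lfloor n\al\rfloor/n$ with $c_1$ and $\al$. Since $f$ is bounded by $M$ on $K$ and the discrepancy in the endpoints is at most $1/n$, this substitution introduces an error of at most $2M/n$, which is uniform in both $y\in\overline{B_R(0)}$ and $\al\in[c_1,c_2]$ and tends to zero. Combining this with the near/far decomposition gives the result.

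The only mild obstacle is keeping the uniformity simultaneously in $y\in\RR^d$ and $\al\in[c_1,c_2]$: the $y$-uniformity is handled cleanly by the tail/compact split, while the $\al$-uniformity is reduced to controlling the fractional-endpoint contributions through the boundedness of $f$ on the relevant compact, which is harmless. No delicate estimates are required beyond uniform continuity on a compact set and the decay hypothesis.
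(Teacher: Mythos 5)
The paper does not actually prove this lemma: it cites \cite[Lemma 2.21]{KaT} and remarks in a footnote that the stated generalization (general $d$, supremum over $\alpha$) follows from ``a trivial modification of their argument.'' So there is no in-paper proof to compare against. Your proposal is a correct, self-contained proof, and it is the natural argument one would expect such a ``trivial modification'' to produce: a near/far split in $y$ (decay hypothesis controls the far region outright; uniform continuity and boundedness on the compact $[c_1,c_2]\times\overline{B_R(0)}$ handle the near region), followed by the $O(M/n)$ endpoint correction to pass from $[\lfloor nc_1\rfloor/n,\lfloor n\alpha\rfloor/n]$ to $[c_1,\alpha]$, each bound manifestly uniform in $\alpha$. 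Two cosmetic points: the constant in your near-region Riemann-sum estimate should be $\eta(c_2-c_1+1)$ rather than $\eta(c_2-c_1)$, since the number of summands can exceed $n(c_2-c_1)$ by one, and your comparison is a left-endpoint rather than midpoint rule; neither affects the conclusion. There is also a small domain technicality inherited from the lemma's statement itself (for $k=\lfloor nc_1\rfloor$ one may have $k/n<c_1$, and your intermediate integral has lower limit $\lfloor nc_1\rfloor/n<c_1$), which is silently resolved by extending $f$ continuously to a slightly larger interval; this does not affect any of your bounds. Overall the argument is sound.
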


We finish Section \ref{Section: Strategy} with the following result on the tails of $\tau_a$,
which is a straightforward consequence of Theorem \ref{Theorem: Hitting Away Local} (see also \cite{U3}
and \cite[Corollary 2]{Ka}).

\begin{lemma}\label{Lemma: Tail Bounds}
For every $\ka>0$, it holds that
\begin{align}\label{Equation: Avoiding Away 2}
\Pr[\La^a_n=0]=\erf\left(\frac{|a|}{\sqrt{2\nu n}}\right)+o(1)
\end{align}
as $n\to\infty$ with $o(1)$ uniform in $a\in\ZZ$ such that $|a|\geq\ka\sqrt{n}$.
\end{lemma}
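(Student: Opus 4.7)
The plan is to prove the equivalent statement $\Pr[\tau_a\leq n]=\erfc(|a|/\sqrt{2\nu n})+o(1)$ uniformly for $|a|\geq\ka\sqrt n$, which yields the lemma via $\Pr[\La^a_n=0]=\Pr[\tau_a>n]=1-\Pr[\tau_a\leq n]$ and $1-\erfc=\erf$. For a parameter $\ka'\in(0,1)$ to be sent to $0$ at the very end, I split
\[\Pr[\tau_a\leq n]=\Pr[\tau_a\leq\ka' n]+\sum_{k=\lfloor\ka' n\rfloor+1}^n\Pr[\tau_a=k].\]

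Kolmogorov's maximal inequality bounds the initial piece uniformly:
\[\Pr[\tau_a\leq\ka' n]\leq\Pr\left[\max_{k\leq\ka' n}|S_k|\geq|a|\right]\leq\frac{\ka' n\nu}{a^2}\leq\frac{\nu\ka'}{\ka^2}.\]
For the middle sum, Theorem \ref{Theorem: Hitting Away Local} gives $\Pr[\tau_a=k]=\frac{|a|\ex^{-a^2/2\nu k}}{k\sqrt{2\pi\nu k}}+e_{a,k}/k$ with $\sup_{a\in\ZZ}|e_{a,k}|\to 0$ as $k\to\infty$. Setting $\eta_n(\ka'):=\sup_{k\geq\ka' n}\sup_{a\in\ZZ}|e_{a,k}|$, which vanishes as $n\to\infty$ for any fixed $\ka'>0$, the aggregated error from replacing each $\Pr[\tau_a=k]$ by its main term is at most $\eta_n(\ka')\log(1/\ka')=o(1)$.

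The remaining main term is the Riemann sum $\frac{1}{n}\sum_{k=\lfloor\ka' n\rfloor+1}^n f(k/n,\al)$, where $\al:=a/\sqrt n$ and
\[f(u,\al):=\frac{|\al|\ex^{-\al^2/(2\nu u)}}{u\sqrt{2\pi\nu u}}.\]
Lemma \ref{Lemma: Uniform Riemann Approximation}, applied to the continuous $f$ on $[\ka',1]$ (its decay hypothesis as $|\al|\to\infty$ coming from the Gaussian factor), yields uniform convergence of this sum to $\int_{\ka'}^1 f(u,\al)\d u$. The substitution $y=|\al|/\sqrt{2\nu u}$ identifies this integral with $\erfc(|\al|/\sqrt{2\nu})-\erfc(|\al|/\sqrt{2\nu\ka'})$, and the subtracted term is at most $\erfc(\ka/\sqrt{2\nu\ka'})\to 0$ as $\ka'\to 0$, uniformly in $|\al|\geq\ka$.

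Combining the estimates gives $|\Pr[\tau_a\leq n]-\erfc(|a|/\sqrt{2\nu n})|\leq O(\ka')+\erfc(\ka/\sqrt{2\nu\ka'})+o(1)$, where the $o(1)$ is in $n$ for each fixed $\ka'$; sending $\ka'\to 0$ after $n\to\infty$ concludes the proof. The main obstacle is that Kaigh's theorem only provides an unquantified $o(1/k)$ error uniformly in $a$, so a naive summation over $k\leq n$ would introduce an uncontrolled $\log n$ factor. Truncating the sum at the macroscopic level $\ka' n$, rather than at a slowly growing $K_n$, shrinks this factor to $\log(1/\ka')$—a constant once $\ka'$ is fixed—and is the crucial device that makes the argument work.
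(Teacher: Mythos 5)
Your argument is correct and follows the same overall skeleton as the paper's: split $\sum_{k\le n}\Pr[\tau_a=k]$ at a macroscopic time $\ka'n$, apply Kaigh's local limit theorem together with Lemma~\ref{Lemma: Uniform Riemann Approximation} on the bulk $[\ka'n,n]$, identify the resulting integral with an $\erfc$ expression, and show the two error pieces (the boundary of the integral and the early-time mass of $\tau_a$) vanish as $\ka'\to 0$. The one genuine departure is your treatment of the early-time piece $\Pr[\tau_a\le\ka'n]$: you bound it by Kolmogorov's maximal inequality, $\Pr[\tau_a\le\ka'n]\le\Pr[\max_{k\le\ka'n}|S_k|\ge|a|]\le\nu\ka'/\ka^2$, whereas the paper invokes Kaigh's Corollary~2 (weak convergence $\Pr[\tau_n\le n^2z]\to\erfc(1/\sqrt{2\nu z})$) to obtain the bound $\erfc(\ka/\sqrt{2\nu\De})$. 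Your route is more elementary, requires only a second-moment bound rather than the full scaling limit of the hitting time, and sidesteps the uniformity-in-$a$ bookkeeping that the Kaigh-Corollary approach implicitly needs when $a$ varies across the range $|a|\ge\ka\sqrt n$. The trade-off is that the $O(\ka')$ bound from Chebyshev/Kolmogorov is quantitatively weaker than the $\erfc(\ka/\sqrt{2\nu\ka'})$ decay, but since only qualitative vanishing as $\ka'\to0$ is needed, this costs nothing. Your explicit $\eta_n(\ka')\log(1/\ka')$ accounting of the aggregated Kaigh error, and the closing remark on why truncation must be at scale $\ka'n$ rather than at a slowly growing threshold, make transparent a step the paper passes through implicitly in \eqref{Equation: De must be fixed 1}--\eqref{Equation: De must be fixed 2}.
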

\begin{proof}
Given that
\[\int_0^1\frac{|a|\ex^{-a^2/2\nu nu}}{\sqrt{2\pi \nu nu^3}}\d u
=\erfc\left(\frac{|a|}{\sqrt{2\nu n}}\right),\qquad a\in\ZZ,\]
to show \eqref{Equation: Avoiding Away 2}, we need only prove that
\begin{align}\label{Equation: Hitting Time Away from Zero 2}
\lim_{n\to\infty}\sup_{|a|\geq\ka \sqrt{n}}\left|\sum_{k=1}^n\Pr[\tau_a=k]
-\int_0^1\frac{|a|\ex^{-a^2/2\nu nu}}{\sqrt{2\pi \nu n u^3}}\d u\right|=0.
\end{align}

For every $\De\in(0,1)$, \eqref{Equation: Hitting Time Away from Zero 2} is bounded by the
sum of the limits
\begin{align*}
A_1&:=\lim_{n\to\infty}\sup_{a\in\ZZ}	\left|\sum_{k=\lfloor n\De\rfloor+1}^n\Pr[\tau_a=k]
-\int_{\De}^1\frac{|a|\ex^{-a^2/2\nu nu}}{\sqrt{2\pi\nu n u^3}}\d u\right|,\\
A_2&:=\limsup_{n\to\infty}\sup_{|a|\geq\ka \sqrt{n}}
\int_0^\De\frac{|a|\ex^{-a^2/2\nu nu}}{\sqrt{2\pi\nu n u^3}}\d u,\text{ and}\\
A_3&:=\limsup_{n\to\infty}\sup_{|a|\geq\ka \sqrt{n}}\Pr[\tau_a\leq n\De].
\end{align*}
Our strategy is to prove that $A_1+A_2+A_3$ can be made arbitrarily small by taking $\De\to0$,
which then implies that \eqref{Equation: Hitting Time Away from Zero 2} holds.

For $A_1$, according to \eqref{Equation: Hitting Away Local},
\begin{align}\label{Equation: De must be fixed 1}
\sum_{k=\lfloor n\De\rfloor+1}^n\Pr[\tau_a=k]
=\sum_{k=\lfloor n\De\rfloor+1}^n\left(\frac{|a|\ex^{-a^2/2\nu k}}{\sqrt{2\pi\nu k^3}}+\frac{o(1)}k\right),
\end{align}
as $n\to\infty$, with $o(1)$ uniform in $a\in\ZZ$ and $\lfloor n\De\rfloor+1\leq k\leq n$. Thus,
\begin{align}\label{Equation: De must be fixed 2}
\sum_{k=\lfloor n\De\rfloor+1}^n\Pr[\tau_a=k]
=\frac1n\sum_{k=\lfloor n\De\rfloor+1}^n\frac{|a|\ex^{-a^2/2\nu n(k/n)}}{\sqrt{2\pi\nu n (k/n)^3}}+o(1),
\end{align}
with $o(1)$ uniform in $a\in\ZZ$.
If we apply Lemma \ref{Lemma: Uniform Riemann Approximation} with the function
\[f(u,y):=\frac{|y|\ex^{-y^2/2u}}{\sqrt{2\pi u^3}},\qquad (u,y)\in[\De,1]\times\RR,\]
then we get the desired result: For any $\De\in(0,1)$,
\[A_1=\lim_{n\to\infty}\sup_{a\in\ZZ}\left|\frac1n\sum_{k=\lfloor n\De\rfloor+1}^n
\frac{|a|\ex^{-a^2/2\nu n(k/n)}}{\sqrt{2\pi\nu n(k/n)^3}}
-\int_{\De}^1\frac{|a|\ex^{-a^2/2\nu nu}}{\sqrt{2\pi\nu n u^3}}\d u\right|=0.\]

For $A_2$, since $\erfc$ is strictly decreasing, a direct computation shows that
\[\sup_{|a|\geq\ka \sqrt{n}}\int_0^\De\frac{|a|\ex^{-a^2/2\nu nu}}{\sqrt{2\pi\nu n u^3}}\d u
=\sup_{|a|\geq\ka \sqrt{n}}\erfc\left(\frac{|a|}{\sqrt{2\nu n\De}}\right)=\erfc\left(\frac{\ka}{\sqrt{2\nu\De}}\right).\]
This vanishes as $\De\to0$.

Finally, to control $A_3$, we use the following result due to Kaigh \cite[Corollary 2]{Ka}:
For every fixed $z\geq0$, it holds that
\begin{align}\label{Equation: Kaigh Tail Bound}
\lim_{n\to\infty}\Pr[\tau_n\leq n^2z]=\erfc\left(\frac{1}{\sqrt{2\nu z}}\right).
\end{align}
For every $|a|\geq\ka\sqrt{n}$, let $\ka_a\geq\ka$ be such that $|a|=\ka_a\sqrt{n}$. Then,
\[\Pr[\tau_a\leq n\De]=\Pr[\tau_{\ka_a\sqrt{n}}\leq \ka_a^2n(\De/\ka_a^2)]
\leq\Pr[\tau_{\ka_a\sqrt{n}}\leq \ka_a^2n(\De/\ka^2)].\]
Thus, it follows from \eqref{Equation: Kaigh Tail Bound} that
\[A_3\leq\limsup_{n\to\infty}\Pr[\tau_{n}\leq n^2(\De/\ka^2)]=\erfc\left(\frac{\ka}{\sqrt{2\nu\De}}\right),\]
which vanishes as $\De\to0$.
\end{proof}

\begin{remark}\label{Remark: Hitting Time Away from Zero Rescaled}
It is easy to see that for any $\De>0$, the arguments used to prove \eqref{Equation: Avoiding Away 2}
also imply that
\[\lim_{n\to\infty}\sup_{|a|\geq\ka \sqrt{n}}\left|\Pr[\tau_a>\lfloor n\De\rfloor]
-\erf\left(\frac{|a|}{\sqrt{2\nu n\De}}\right)\right|=0.\]
\end{remark}

\section{Proof of Theorem \ref{Theorem: Main}}\label{Section: Proof}

We focus our proof on the case $x,|x-a|\geq\ka\sqrt{n}$, that is, we want to prove that as $n\to\infty$,
we have the limits
\begin{align}
\sup_{\substack{x,a\in\ZZ,~\ell\in\NN \\ |a|,|x-a|,\ell\geq\ka\sqrt{n}}}
\left|n\Pr[S_n=x,\La^a_n=\ell]
-\phi_\nu\left(\frac{a}{\sqrt{n}},\frac{x}{\sqrt{n}},\frac{\ell}{\sqrt{n}}\right)\right|&=o(1)\quad\text{and}
\label{Equation: Main Theorem 1 in Section}\\
\sup_{\substack{a\in\ZZ,~\ell\in\NN \\ |a|,\ell\geq\ka\sqrt{n}}}
\left|\sqrt{n}\Pr[\La^a_n=\ell]
-\psi_\nu\left(\frac{a}{\sqrt{n}},\frac{\ell}{\sqrt{n}}\right)\right|&=o(1).
\label{Equation: Main Theorem 2 in Section}
\end{align}
Indeed, the proofs for the cases where
$a=0$ or $|x-a|=0$ use simpler versions of the same argument, hence they are omitted.

\subsection{Proof of \eqref{Equation: Main Theorem 1 in Section}}

\subsubsection{Step 1}\label{Section: Step 1}

Our first step is to show that
for every fixed $\De\in(0,1)$, one has
\begin{multline}\label{Equation: Main 1 Proof Part 1}
\lim_{n\to\infty}
\sup_{\substack{x,a\in\ZZ,~k,\ell\in\NN \\ |x-a|,\ell\geq\ka\sqrt{n} \\ k\leq\lfloor(1-\De)n\rfloor}}
\Bigg|n\sum_{m=1}^{n-k}\Pr[\Om_{\ell-1}=m]\Pr[\tau_{x-a}=n-k-m]\\
-\frac{(|x-a|+\nu\ell)\ex^{-(|x-a|+\nu\ell)^2/2\nu n(1-k/n)}}{\sqrt{2\pi\nu n(1-k/n)^3}}\Bigg|=0.
\end{multline}
Define
\[I_{t,\tilde\De}(y,z):=\int_{\tilde\De}^{(1-t)(1-\tilde\De)}\frac{y\ex^{-y^2/2u}}{\sqrt{2\pi u^3}}
\frac{z\ex^{-z^2/2(1-t-u)}}{\sqrt{2\pi (1-t-u)^3}}\d u,\qquad y,z>0,~t,\tilde\De\in[0,1).\]
According to Lemma \ref{Lemma: Integral 1}, it holds that
\begin{align}\label{Equation: Main 1 Integral}
I_{t,0}(y,z)=\frac{(y+z)\ex^{-(y+z)^2/2(1-t)}}{\sqrt{2\pi(1-t)^3}},\qquad y,z>0,~t\in[0,1).
\end{align}
Thus, \eqref{Equation: Main 1 Proof Part 1} is bounded above by the sum of the terms
\begin{align*}
B_1&:=\sup_{n\in\NN}
\sup_{\substack{x,a\in\ZZ,~k,\ell\in\NN \\ |x-a|,\ell\geq\ka\sqrt{n} \\ k\leq\lfloor(1-\De)n\rfloor}}
\left|I_{k/n,0}\left(\frac{\nu\ell}{\sqrt{\nu n}},\frac{|x-a|}{\sqrt{\nu n}}\right)
-I_{k/n,\tilde\De}\left(\frac{\nu\ell}{\sqrt{\nu n}},\frac{|x-a|}{\sqrt{\nu n}}\right)\right|,\\
B_2&:=\limsup_{n\to\infty}
\sup_{\substack{x,a\in\ZZ,~k,\ell\in\NN \\ |x-a|,\ell\geq\ka\sqrt{n} \\ k\leq\lfloor(1-\De)n\rfloor}}
\left|n\sum_{m=\lfloor(n-k)(1-\tilde\De)\rfloor+1}^{n-k}\Pr[\Om_{\ell-1}=m]\Pr[\tau_{x-a}=n-k-m]\right|,\\
B_3&:=\limsup_{n\to\infty}
\sup_{\substack{x,a\in\ZZ,~k,\ell\in\NN \\ |x-a|,\ell\geq\ka\sqrt{n} \\ k\leq\lfloor(1-\De)n\rfloor}}
\left|n\sum_{m=1}^{\lfloor n\tilde\De\rfloor}\Pr[\Om_{\ell-1}=m]\Pr[\tau_{x-a}=n-k-m]\right|,\text{ and}
\end{align*}
\begin{multline*}
B_4:=\lim_{n\to\infty}
\sup_{\substack{x,a\in\ZZ,~k,\ell\in\NN \\ \ell\geq\ka\sqrt{n}}}
\Bigg|n\sum_{m=\lfloor n\tilde\De\rfloor+1}^{\lfloor(n-k)(1-\tilde\De)\rfloor}\Pr[\Om_{\ell-1}=m]\Pr[\tau_{x-a}=n-k-m]\\
-I_{k/n,\tilde\De}\left(\frac{\nu\ell}{\sqrt{\nu n}},\frac{|x-a|}{\sqrt{\nu n}}\right)\Bigg|.
\end{multline*}
Our strategy is to prove that each of these terms can be made arbitrarily small by taking $\tilde\De\to0$.

For $B_1$, if we let
\begin{align*}
I^1_{t,\tilde\De}(y,z)&:=\int_{(1-t)(1-\tilde\De)}^{(1-t)}\frac{y\ex^{-y^2/2u}}{\sqrt{2\pi u^3}}
\frac{z\ex^{-z^2/2(1-t-u)}}{\sqrt{2\pi (1-t-u)^3}}\d u\quad\text{and}\\
I^2_{t,\tilde\De}(y,z)&:=\int_0^{\tilde\De}\frac{y\ex^{-y^2/2u}}{\sqrt{2\pi u^3}}
\frac{z\ex^{-z^2/2(1-t-u)}}{\sqrt{2\pi (1-t-u)^3}}\d u,
\end{align*}
then it is enough to prove that for any fixed $\De$ and $\tilde\ka>0$, one has
\begin{align}\label{Equation: tilde A1 reduced}
\lim_{\tilde\De\to0}\sup_{\substack{y,z\geq\tilde\ka \\ 0\leq t\leq 1-\De}}
I^i_{t,\tilde\De}(y,z)=0,\qquad i=1,2.
\end{align}
Let us begin with the case $i=1$.
For every $0<u\leq 1$, the function
\[y\mapsto \frac{y\ex^{-y^2/2u}}{\sqrt{2\pi u^3}}\]
attains its maximum at $y=\sqrt{u}$, in which case the maximal value is $(2\ex\pi u^2)^{-1/2}$.
Therefore,
\[I^1_{t,\tilde\De}(y,z)\leq\frac{1}{\sqrt{2\ex\pi}\De(1-\tilde\De)}\int_{(1-t)(1-\tilde\De)}^{(1-t)}
\frac{z\ex^{-z^2/2(1-t-u)}}{\sqrt{2\pi (1-t-u)^3}}\d u,\]
since $u\geq(1-t)(1-\tilde\De)\geq\De(1-\tilde\De)$.
Moreover, for $z\geq\tilde\ka$ and $t\geq0$,
\begin{align}\label{Equation: tilde A1 reduced 2}
\int_{(1-t)(1-\tilde\De)}^{1-t}
\frac{z\ex^{-z^2/2(1-t-u)}}{\sqrt{2\pi (1-t-u)^3}}\d u=\erfc\left(\frac{z}{\sqrt{2(1-t)\tilde\De}}\right)\leq
\erfc\left(\frac{\tilde\ka}{\sqrt{2\tilde\De}}\right).
\end{align}
Combining these two bounds gives \eqref{Equation: tilde A1 reduced} for $i=1$.
By reversing the role of $y$ and $z$ in those upper bounds, we may similarly prove
\eqref{Equation: tilde A1 reduced} in the case $i=2$.

For $B_2$, if we combine the fact that $y\mapsto |y|\ex^{-y^2/2}/\sqrt{2\pi}$ is uniformly bounded in $y$
with $\ell\geq\ka\sqrt{n}$, $m\geq(n-k)(1-\tilde\De)$, and $k\leq(1-\De)n$,
then \eqref{Equation: Omega Local Limit} implies that
\begin{align}\label{Equation: Omega Bound for Large m}
\Pr[\Om_{\ell-1}=m]
\leq\frac1n\frac{C}{(m/n)}
\leq\frac1n\frac{C}{(1-k/n)(1-\tilde\De)}
\leq\frac1n\frac{C}{\De(1-\tilde\De)}
\end{align}
for $C>0$ independent of all parameters. Moreover,
\[\sum_{m=\lfloor(n-k)(1-\tilde\De)\rfloor+1}^{n-k}\Pr[\tau_{x-a}=n-k-m]\leq\Pr[\tau_{x-a}\leq n\tilde\De].\]
Since $|x-a|\geq\ka\sqrt{n}$, combining these bounds with Remark
\ref{Remark: Hitting Time Away from Zero Rescaled} implies that $B_2\to0$ as $\tilde\De\to0$.

For $B_3$, by combining \eqref{Equation: Hitting Away Local}, $m\leq\tilde\De n$, and $k\leq(1-\De)n$,
we obtain the bound
\begin{multline*}
n\Pr[\tau_{x-a}=n-k-m]=\frac{n}{n-k-m}(n-k-m)\Pr[\tau_{x-a}=n-k-m]\\
\leq\frac{C}{1-k/n-m/n}\leq\frac{C}{\De-\tilde\De}
\end{multline*}
for $C$ independent of all parameters.
Moreover, it follows from \eqref{Equation: Omega Local Limit} that
\[\sum_{k=1}^{\lfloor\tilde\De n\rfloor}\Pr[\Om_{\ell-1}=k]=\frac1n\sum_{k=1}^{\lfloor\tilde\De n\rfloor}\frac{(\nu\ell)\ex^{-(\nu\ell)^2/2\nu n(k/n)}}{\sqrt{2\pi\nu n(k/n)^3}}+o(1)\]
with $o(1)$ uniform in $\ell\geq\ka\sqrt{n}$.
Given that
\[f(u,y):=\frac{y\ex^{-y^2/2\nu u}}{\sqrt{2\pi\nu u^3}},\qquad (u,y)\in[0,1]\times[\ka,\infty)\]
is uniformly continuous on its specified domain, it follows from Lemma \ref{Lemma: Uniform Riemann Approximation} that
\[\sum_{k=1}^{\lfloor\tilde\De n\rfloor}\Pr[\Om_{\ell-1}=k]
=\int_0^{\tilde\De}\frac{(\nu\ell)\ex^{-(\nu\ell)^2/2\nu nu}}{\sqrt{2\pi\nu nu^3}}\d u+o(1)\]
with $o(1)$ uniform in $\ell\geq\ka\sqrt{n}$.
This can then be controlled as $\tilde\De\to0$
in the same way as the term $A_2$ in the proof of Lemma \ref{Lemma: Tail Bounds}.

Finally, for $B_4$, if we write
\begin{multline*}
n\Pr[\Om_{\ell-1}=m]\Pr[\tau_{x-a}=n-k-m]\\
=\frac{1}{n}n\Pr[\Om_{\ell-1}=m]\frac{n-k-m}{1-k/n-m/n}\Pr[\tau_{x-a}=n-k-m],
\end{multline*}
then we see that $B_4=0$ for any fixed $\tilde\De$ by combining
\eqref{Equation: Hitting Away Local}, \eqref{Equation: Omega Local Limit},
and Lemma \ref{Lemma: Uniform Riemann Approximation} applied to the function
\[f(u;y,z):=\frac{y\ex^{-y^2/2u}}{\sqrt{2\pi u^3}}
\frac{z\ex^{-z^2/2(1-t-u)}}{\sqrt{2\pi (1-t-u)^3}},\qquad (u;y,z)\in[\tilde\De,1-\tilde\De]\times\RR^2.\]

\subsubsection{Step 2}

With \eqref{Equation: Main 1 Proof Part 1} proved, if we define
\[I_\De(y,z):=\int_{\De}^{1-\De}\frac{y\ex^{-y^2/2u}}{\sqrt{2\pi u^3}}
\frac{z\ex^{-z^2/2(1-t-u)}}{\sqrt{2\pi (1-t-u)^3}}\d u,\qquad y,z>0,~\De\in[0,1/2)\]
so that $I_0(y,z)=(y+z)\ex^{-(y+z)^2/2}/\sqrt{2\pi}$ by Lemma \ref{Lemma: Integral 1},
then we need only show that the following can be made arbitrarily small by taking $\De\to0$.
\begin{align*}
C_1&:=\sup_{n\in\NN}\sup_{\substack{x,a\in\ZZ,~\ell\in\NN \\ |a|,|x-a|,\ell\geq\ka\sqrt{n}}}
\left|I_{\De}\left(\frac{|a|}{\sqrt{\nu n}},\frac{|x-a|+\nu\ell}{\sqrt{\nu n}}\right)
-I_{0}\left(\frac{|a|}{\sqrt{\nu n}},\frac{|x-a|+\nu\ell}{\sqrt{\nu n}}\right)\right|,\\
C_2&:=\limsup_{n\to\infty}\sup_{\substack{x,a\in\ZZ,~\ell\in\NN \\ |x-a|\geq\ka\sqrt{n}}}
n\sum_{k=\lfloor(1-\De)n\rfloor+1}^n\Pr[\tau_a=k]\\
&\hspace{2.2in}\cdot\sum_{m=1}^{n-k}\Pr[\Om_{\ell-1}=m]\Pr[\tau_{x-a}=n-k-m],\\
C_3&:=\lim_{n\to\infty}\sup_{\substack{x,a\in\ZZ,~\ell\in\NN \\ \ell\geq\ka\sqrt{n}}}
\Bigg|\sum_{k=\lfloor n\De\rfloor+1}^{\lfloor(1-\De)n\rfloor}
\Pr[\tau_a=k]\frac{(|x-a|+\nu\ell)\ex^{-(|x-a|+\nu\ell)^2/2\nu n(1-k/n)}}{\sqrt{2\pi\nu n(1-k/n)^3}}\\
&\hspace{2.55in}-I_{\De}\left(\frac{|a|}{\sqrt{\nu n}},\frac{|x-a|+\nu\ell}{\sqrt{\nu n}}\right)\Bigg|,\text{ and}\\
C_4&:=\limsup_{n\to\infty}\sup_{\substack{x,a\in\ZZ,~\ell\in\NN \\ |a|,|x-a|,\ell\geq\ka\sqrt{n}}}
\sum_{k=1}^{\lfloor n\De\rfloor}\Pr[\tau_a=k]
\frac{(|x-a|+\nu\ell)\ex^{-(|x-a|+\nu\ell)^2/2\nu n(1-k/n)}}{\sqrt{2\pi\nu n(1-k/n)^3}}.
\end{align*}

For $C_1$, we proceed as in \eqref{Equation: tilde A1 reduced}.

For $C_2$, $k\geq(1-\De)n$ implies by \eqref{Equation: Hitting Away Local} that $\Pr[\tau_a=k]\leq C/(1-\De)n$
for $C>0$ independent of all parameters. Thus, it is enough to control
\[\sup_{\lfloor(1-\De)n\rfloor+1\leq k\leq n}
\sum_{m=1}^{n-k}\Pr[\Om_{\ell-1}=m]\Pr[\tau_{x-a}=n-k-m].\]
With the trivial bound $\Pr[\Om_{\ell-1}=m]\leq 1$, this is bounded by
\[\sum_{m=1}^{n-\lfloor(1-\De)n\rfloor-1}\Pr[\tau_{x-a}=\De n-\lfloor(1-\De)n\rfloor-1-m]\leq\Pr[\tau_{x-a}\leq\De n].\]
Taking a supremum over $|x-a|\geq\ka\sqrt{n}$, we can prove that this vanishes as $\De\to0$ by Remark
\ref{Remark: Hitting Time Away from Zero Rescaled}.

For $C_3$, we need only combine \eqref{Equation: Hitting Away Local} with Lemma
\ref{Lemma: Uniform Riemann Approximation}.

Finally, for $C_4$, if we use the bound
\[\frac{(|x-a|+\nu\ell)\ex^{-(|x-a|+\nu\ell)^2/2\nu n(1-k/n)}}{\sqrt{2\pi\nu n(1-k/n)^3}}\leq\frac{C}{(1-k/n)}
\leq\frac{C}{1-\De},\]
valid for $k\leq\De n$ and any $x,a,\ell$, and $n$,
then we need only control
\[\sum_{k=1}^{\lfloor n\De\rfloor}\Pr[\tau_a=k]=\Pr[\tau_a\leq\De n].\]
Taking a supremum over $|a|\geq\ka\sqrt{n}$, this vanishes as $\De\to0$.

\subsection{Proof of \eqref{Equation: Main Theorem 2 in Section}}\label{Section: Main Theorem 2}

The technical details of the proof of \eqref{Equation: Main Theorem 2 in Section} are virtually the same as
in \eqref{Equation: Main Theorem 1 in Section}, the only difference being that the terms
$\Pr[\tau_{x-a}=n-k-m]$ are replaced by $\Pr[\tau_0>n-m-k]$. Consequently, we omit the proof.
To illustrate the need for a different normalization (i.e., $\sqrt{n}$ instead of $n$) and the appearance of
the function $\psi_\nu(a,\ell):=2\nu\ex^{-(|a|+\nu\ell)^2/2\nu}/\sqrt{2\pi\nu}$, we offer the following heuristic.
By combining \eqref{Equation: Hitting Away Local}, \eqref{Equation: Omega Local Limit}, and
\eqref{Equation: Hitting Zero Deviation} as well as Lemma \ref{Lemma: Integral 2}, we see that
\begin{align*}
&\sqrt{n}\sum_{k=1}^n\Pr[\tau_a=k]\sum_{m=1}^{n-k}\Pr[\Om_{\ell-1}=m]\Pr[\tau_0>n-k-m]\\
&=\frac1n\sum_{k=1}^n\frac{k}{k/n}\Pr[\tau_a=k]\frac1n\sum_{m=1}^{n-k}n\Pr[\Om_{\ell-1}=m]
\frac{\sqrt{n-k-m}}{\sqrt{1-k/n-m/n}}\Pr[\tau_0>n-k-m]\\
&\approx\int_0^1\frac{|a|\ex^{-a^2/2\nu nt}}{\sqrt{2\pi\nu n t^3}}
\int_0^{1-t}\frac{(\nu\ell)\ex^{-(\nu\ell)^2/2\nu nu}}{\sqrt{2\pi\nu n u^3}}\frac{2\sqrt\nu}{\sqrt{2\pi(1-t-u)}}\d u\dd t\\
&=2\nu\frac{\ex^{-(|a|+\nu\ell)^2/2\nu n}}{\sqrt{2\pi\nu}}.
\end{align*}

\section{Discussion}\label{Section: Discussion}

\subsection{Stable Laws and Infinite Variance}

The most fundamental ingredient in the proof of Theorem \ref{Theorem: Main} (i.e., Lemma \ref{Lemma: Path Transformation})
does not require that $X$ has finite variance. Thus, in principle, it seems that the methods in this note could be used
to prove analogous statements in the case where $X$ has a lattice distribution in the domain of attraction of some
general stable law (see, for instance, \cite{VaWa} for conditioned local limit theorems that hold in this level of
generality).

Much of the technical difficulties in treating this general case comes from the necessity of dealing with
normalizations of the form $n^{\al}v(n)$, where $v$ is a slowly varying function, as well as limiting densities
that have no closed-form formula. These problems are compounded by the fact that many of the results that are
essential in our arguments, such as \cite[Theorem 8]{Ke} and every result in \cite{Ka}, have not been proved in
such generality. Thus using the methods in this note to extend Theorem
\ref{Theorem: Main} to general stable laws appears to be a challenging undertaking.

\subsection{Nonlattice Random Walks}

Another interesting generalization of this note would be to extend Theorem \ref{Theorem: Main} for nonlattice random variables
and hopefully prove a statement in the style of \cite[Theorem 1]{Ca} or \cite[Theorem 3]{VaWa}.
Of course, if $X$ is nonlattice, then the definition of the occupation measure that we use must be different from
\eqref{Equation: Occupation Measure}.

The existing strong invariance principles involving nonlattice occupation measures
(such as \cite[Section 4]{BK}) suggest that we use an occupation measure of the form
\[\sum_{k=1}^n\1_{\{S_k\in[a-\eps,a+\eps)\}},\qquad a\in\RR,~n\in\NN,\]
where $\eps>0$ is a fixed constant (see Figure \ref{Figure: Nonlattice Local Time} below).
\begin{figure}[htbp]
\begin{center}
%
%
\begin{tikzpicture}

\begin{axis}[%
width=4in,
height=2in,
at={(1.011in,0.642in)},
scale only axis,
xmin=0,
xmax=100,
ymin=-8,
ymax=6,
axis background/.style={fill=white},
axis x line*=bottom,
axis y line*=left
]
\addplot [color=blue, forget plot,thick]
  table[row sep=crcr]{%
0	0\\
1	0.499932667897628\\
2	0.0794232343084948\\
3	1.15876960309019\\
4	1.27248077826823\\
5	0.755384296672616\\
6	2.27613031664442\\
7	3.57843441713669\\
8	3.75218108558052\\
9	4.17644722881151\\
10	4.47797893036798\\
11	3.46556853458801\\
12	2.77706562629463\\
13	2.67634115093162\\
14	1.74272475142173\\
15	2.93544539630396\\
16	1.87807787879546\\
17	0.928643077587864\\
18	-0.212057708139105\\
19	-1.15545625393246\\
20	-1.37820254618362\\
21	-2.03256342009376\\
22	-0.565933318067464\\
23	-0.807702006487329\\
24	-1.89953030101037\\
25	-0.496981483467815\\
26	1.16491564885522\\
27	0.953155024027938\\
28	-0.393967502147508\\
29	-1.23205597979736\\
30	-1.54825970830889\\
31	-1.21953006506542\\
32	-2.04325273363858\\
33	-1.68699382160435\\
34	-0.955320895461444\\
35	-1.91921848356968\\
36	-3.24452261716328\\
37	-3.94885805314414\\
38	-4.5766284301407\\
39	-4.83932248029723\\
40	-4.81210058371044\\
41	-6.2479159804599\\
42	-7.07070165486532\\
43	-6.02795641394859\\
44	-7.65878521081966\\
45	-6.17319089359525\\
46	-5.37530137956161\\
47	-5.4147610518067\\
48	-5.14274226108662\\
49	-6.0528186367233\\
50	-6.19537047749001\\
51	-4.59118472059443\\
52	-4.42904495471309\\
53	-4.35582828908767\\
54	-5.28561260760271\\
55	-5.32407195082053\\
56	-4.89431519900392\\
57	-4.27377148256231\\
58	-4.63571739272206\\
59	-5.09493031260721\\
60	-3.40451106729684\\
61	-5.00583060737182\\
62	-3.67156948805823\\
63	-2.23990192091799\\
64	-1.2138908860537\\
65	-2.60399232969787\\
66	-3.42889474626217\\
67	-3.99923538286843\\
68	-3.37663949621679\\
69	-4.63565636012081\\
70	-3.86930182497091\\
71	-5.23151869531103\\
72	-4.69888761544898\\
73	-4.71906969094678\\
74	-3.75240616579434\\
75	-3.00749587519199\\
76	-1.60896682930454\\
77	-0.254771550658556\\
78	-0.82924758752011\\
79	-0.140771828727251\\
80	-1.18758929618328\\
81	-2.81384316233043\\
82	-1.96834512277795\\
83	-1.9682674036137\\
84	-2.03781914689814\\
85	-0.635820188326823\\
86	-0.255230954076467\\
87	0.152377376144327\\
88	1.39752204768221\\
89	2.455768456603\\
90	2.72153958275964\\
91	1.62315079693596\\
92	0.722248854701422\\
93	2.06116546634036\\
94	0.428444836635099\\
95	0.393462220269646\\
96	-0.756871890715853\\
97	0.901326520841275\\
98	1.63812178371507\\
99	1.6397555377116\\
100	1.53960272926608\\
};
\addplot [color=red, dashed, forget plot]
  table[row sep=crcr]{%
0	0.5\\
100	0.5\\
};
\addplot [color=red, dashed, forget plot]
  table[row sep=crcr]{%
0	-0.5\\
100	-0.5\\
};
\addplot [color=black, draw=none, mark=x, mark options={solid, black}, forget plot, thick]
  table[row sep=crcr]{%
1	0.499932667897628\\
};
\addplot [color=black, draw=none, mark=x, mark options={solid, black}, forget plot, thick]
  table[row sep=crcr]{%
2	0.0794232343084948\\
};
\addplot [color=black, draw=none, mark=x, mark options={solid, black}, forget plot, thick]
  table[row sep=crcr]{%
18	-0.212057708139105\\
};
\addplot [color=black, draw=none, mark=x, mark options={solid, black}, forget plot, thick]
  table[row sep=crcr]{%
25	-0.496981483467815\\
};
\addplot [color=black, draw=none, mark=x, mark options={solid, black}, forget plot, thick]
  table[row sep=crcr]{%
28	-0.393967502147508\\
};
\addplot [color=black, draw=none, mark=x, mark options={solid, black}, forget plot, thick]
  table[row sep=crcr]{%
77	-0.254771550658556\\
};
\addplot [color=black, draw=none, mark=x, mark options={solid, black}, forget plot, thick]
  table[row sep=crcr]{%
79	-0.140771828727251\\
};
\addplot [color=black, draw=none, mark=x, mark options={solid, black}, forget plot, thick]
  table[row sep=crcr]{%
86	-0.255230954076467\\
};
\addplot [color=black, draw=none, mark=x, mark options={solid, black}, forget plot, thick]
  table[row sep=crcr]{%
87	0.152377376144327\\
};
\addplot [color=black, draw=none, mark=x, mark options={solid, black}, forget plot, thick]
  table[row sep=crcr]{%
94	0.428444836635099\\
};
\addplot [color=black, draw=none, mark=x, mark options={solid, black}, forget plot, thick]
  table[row sep=crcr]{%
95	0.393462220269646\\
};
\end{axis}
\end{tikzpicture}%
\caption{Nonlattice Occupation Measure of Level $a=0$.}
\label{Figure: Nonlattice Local Time}
\end{center}
\end{figure}
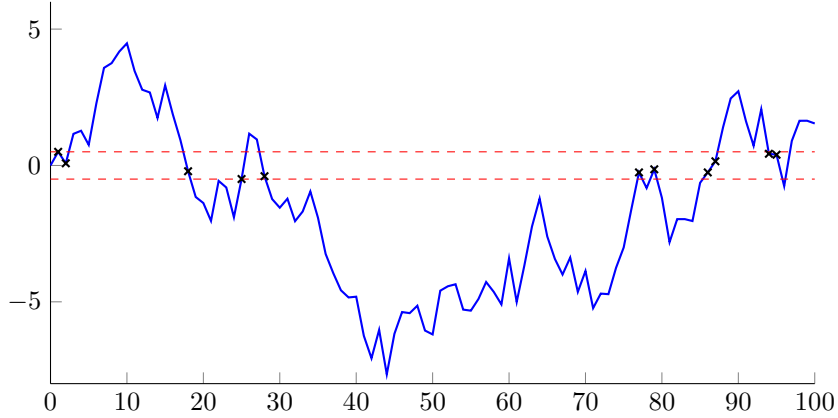
However, it is clear that a path decomposition similar to the one described in Figure \ref{Figure: Path Decomposition} is not as well behaved in this case. For example, given that successive visits of the walk to $[a-\eps,a+\eps)$ occur at different levels, the time increments between these visits are not i.i.d.

Consequently, it appears that a different path transformation or decomposition, a different occupation measure,
or an altogether different approach may be required to tackle this problem.

\appendix

\section{Integral Identities}

\begin{lemma}\label{Lemma: Integral 1}
For every $y,z,t>0$,
\[\int_0^t\frac{y\ex^{-y^2/2u}}{\sqrt{2\pi u^3}}\frac{z\ex^{-z^2/2(t-u)}}{\sqrt{2\pi(t-u)^3}}\d u
=\frac{(y+z)\ex^{-(y+z)^2/2t}}{\sqrt{2\pi t^3}}.\]
\end{lemma}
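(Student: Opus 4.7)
The plan is to identify the function $u\mapsto \frac{y\ex^{-y^2/2u}}{\sqrt{2\pi u^3}}$ (for $y>0$) as the density of a L\'evy (one-sided $\tfrac12$-stable) distribution, whose Laplace transform in $u$ equals $\ex^{-y\sqrt{2\lambda}}$ for every $\lambda\geq0$. This identity is a direct change of variables from the formula $\int_0^\infty \frac{\ex^{-1/4s}}{\sqrt{4\pi s^3}}\ex^{-\lambda s}\d s = \ex^{-\sqrt{\lambda}}$ which is already invoked in the proof of Lemma \ref{Lemma: Omega Local Limit}. In particular, the right-hand side of the lemma, viewed as a function of $t$, has Laplace transform $\ex^{-(y+z)\sqrt{2\lambda}}$.

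The main step is then to apply Fubini-Tonelli to the resulting double Laplace integral: the left-hand side of the lemma, viewed as a function of $t$, is the convolution of the two L\'evy densities with parameters $y$ and $z$, and hence its Laplace transform factors as $\ex^{-y\sqrt{2\lambda}}\cdot\ex^{-z\sqrt{2\lambda}}=\ex^{-(y+z)\sqrt{2\lambda}}$. Since both sides of the claimed identity are nonnegative densities in $t$ sharing the same Laplace transform, uniqueness forces equality for almost every $t>0$; both sides being continuous in $t$, equality then holds pointwise.

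There is no real obstacle in this approach: once the Laplace transform formula for the L\'evy density is in hand, the rest is a one-line convolution argument. A purely probabilistic restatement would invoke the strong Markov property of Brownian motion at the first hitting time $T_y$ of level $y$: conditionally on $\mathcal F_{T_y}$, the shifted process $(B_{T_y+s}-y)_{s\geq 0}$ is again a standard Brownian motion, and its hitting time of $z$ has the same law as $T_z$ and is independent of $T_y$, yielding the distributional identity $T_{y+z}\stackrel{d}{=}T_y+T_z'$; this gives the same density identity and is perhaps more in the spirit of the rest of the paper.
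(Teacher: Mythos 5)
Your Laplace-transform argument is correct and genuinely different from the paper's proof, although you anticipate the paper's route in your last sentence. The paper proves the identity probabilistically: recognize $\frac{y\ex^{-y^2/2t}}{\sqrt{2\pi t^3}}$ as the density of $T_y$, the first hitting time of level $y$ by a standard Brownian motion; by the strong Markov property $T_{y+z}\stackrel{d}{=}T_y+T_z'$ with $T_z'$ an independent copy of $T_z$, and the claimed equation is just the resulting convolution identity for densities. Your primary route instead works purely on the analytic side: the Laplace transform of the L\'evy density with scale $y$ is $\ex^{-y\sqrt{2\lambda}}$ (which, as you note, follows by a change of variables from the formula $\int_0^\infty \frac{\ex^{-1/4s}}{\sqrt{4\pi s^3}}\ex^{-\lambda s}\,\mathrm{d}s=\ex^{-\sqrt{\lambda}}$ already used in Lemma~\ref{Lemma: Omega Local Limit}), so the left-hand convolution has transform $\ex^{-(y+z)\sqrt{2\lambda}}$, matching the right side; uniqueness of Laplace transforms plus continuity in $t$ then gives pointwise equality. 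The trade-off is mild: your approach is self-contained and needs no input from Brownian motion theory, at the cost of a Fubini step and a uniqueness appeal, while the paper's argument is a two-line corollary of facts about Brownian hitting times that fit the probabilistic spirit of the rest of the paper. Both are fully rigorous; you have in fact supplied both.
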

\begin{proof}
If $T_y$ denotes the first hitting time of $y>0$ by a standard Brownian motion, then it is well known that
\[\Pr[T_y\in\dd t]=\frac{y\ex^{-y^2/2t}}{\sqrt{2\pi t^3}}\d t.\]
According to the strong Markov property, if $T_y$ and $T_z$ are independent Brownian
hitting times, then $T_y+T_z$ is equal in distribution to $T_{y+z}$. Therefore, the result follows
directly by a convolution computation.
\end{proof}

\begin{lemma}\label{Lemma: Integral 2}
For every $y,t>0$ and $z\geq0$,
\[\int_0^t\frac{y\ex^{-y^2/2u}}{\sqrt{2\pi u^3}}\frac{\ex^{-z^2/2(t-u)}}{\sqrt{2\pi(t-u)}}\d u
=\frac{\ex^{-(y+z)^2/2t}}{\sqrt{2\pi t}}.\]
\end{lemma}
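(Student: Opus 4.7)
The plan is to mimic the probabilistic argument used for Lemma \ref{Lemma: Integral 1}, interpreting the left-hand side as a convolution and then identifying it with the marginal density of a Brownian motion at a fixed time, restricted to an event that turns out to be vacuous.

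More precisely, let $(B_s)_{s\geq0}$ be a standard Brownian motion and let $T_y:=\inf\{s>0:B_s=y\}$ denote its first hitting time of the level $y>0$. Then the first factor in the integrand is the density of $T_y$,
\[\Pr[T_y\in\dd u]=\frac{y\ex^{-y^2/2u}}{\sqrt{2\pi u^3}}\d u,\]
while the second factor is the transition density $p_{t-u}(z):=\ex^{-z^2/2(t-u)}/\sqrt{2\pi(t-u)}$ of a standard Brownian motion, which is exactly the density (in the spatial variable) at $y+z$ of a Brownian motion started from $y$ at time $u$ and evaluated at time $t$.

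By the strong Markov property applied at the stopping time $T_y$, the integral in question equals the density at $y+z$ of $B_t$ on the event $\{T_y\leq t\}$:
\[\int_0^t\frac{y\ex^{-y^2/2u}}{\sqrt{2\pi u^3}}\frac{\ex^{-z^2/2(t-u)}}{\sqrt{2\pi(t-u)}}\d u
=\frac{\dd}{\dd x}\Pr[B_t\leq x,\,T_y\leq t]\bigg|_{x=y+z}.\]
Now I would observe that since $y>0$ and $z\geq 0$, the point $y+z$ lies on or above $y$, so by continuity of Brownian paths, $\{B_t\geq y\}\subseteq\{T_y\leq t\}$ almost surely, and in particular $\{B_t=y+z\}$ forces $T_y\leq t$. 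Hence the restriction to $\{T_y\leq t\}$ is inessential, and the right-hand side above reduces to the unrestricted Gaussian density $\ex^{-(y+z)^2/2t}/\sqrt{2\pi t}$, giving the claim.

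There is no real obstacle: the only point requiring a line of care is the boundary case $z=0$, where one should note that continuity of $z\mapsto\ex^{-(y+z)^2/2t}/\sqrt{2\pi t}$ together with dominated convergence on the left allows one to pass to the limit $z\downarrow 0$ from the strictly positive case (or equivalently, observe that $\{B_t\geq y\}$ and $\{T_y\leq t\}$ still coincide up to a null set). As an alternative route, one could evaluate the integral analytically by completing the square in $u$ and changing variables, but the probabilistic derivation above is shorter and parallels the proof of Lemma \ref{Lemma: Integral 1}.
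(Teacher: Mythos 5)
Your proof is correct and takes a genuinely different route from the paper's. The paper differentiates the left-hand side with respect to $z$, recognizes the resulting integral as the one evaluated in Lemma~\ref{Lemma: Integral 1}, integrates back in $z$ to conclude the identity holds modulo an unknown function $\Phi(y)$, and then kills $\Phi$ by computing the $z=0$ case explicitly via an antiderivative in $u$. You instead go straight through the first-passage decomposition: the left-hand side is the density of $B_t$ at $y+z$ on the event $\{T_y\leq t\}$, and for $y+z\geq y$ the restriction is vacuous because continuity of paths forces $T_y\leq t$ whenever $B_t\geq y$. This is shorter, bypasses Lemma~\ref{Lemma: Integral 1} altogether, and is conceptually of a piece with how the paper proves Lemma~\ref{Lemma: Integral 1} itself. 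One small inaccuracy in a parenthetical: $\{B_t\geq y\}$ and $\{T_y\leq t\}$ do not coincide up to a null set (the path may hit $y$ and then return below it by time $t$, so $\{T_y\leq t\}$ is strictly larger); what your argument actually uses is the one-sided inclusion $\{B_t\geq y\}\subseteq\{T_y\leq t\}$, which does hold almost surely, and your primary route of passing $z\downarrow 0$ by monotone convergence is sound on its own.
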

\begin{proof}
Note that
\[\frac{\partial}{\partial z}\int_0^t\frac{y\ex^{-y^2/2u}}{\sqrt{2\pi u^3}}\frac{\ex^{-z^2/2(t-u)}}{\sqrt{2\pi(t-u)}}\d u
=-\int_0^t\frac{y\ex^{-y^2/2u}}{\sqrt{2\pi u^3}}\frac{z\ex^{-z^2/2(t-u)}}{\sqrt{2\pi(t-u)^3}}\d u.\]
Therefore, Lemma \ref{Lemma: Integral 1} implies that
\begin{align}\label{Equation: Integral 2}
\int_0^t\frac{y\ex^{-y^2/2u}}{\sqrt{2\pi u^3}}\frac{\ex^{-z^2/2(t-u)}}{\sqrt{2\pi(t-u)}}\d u
=\frac{\ex^{-(y+z)^2/2t}}{\sqrt{2\pi t}}+\Phi(y),\qquad y,z,t>0
\end{align}
for some function $\Phi$.

We note that for any $y,t>0$,
\[\frac{\dd}{\dd u}
\left(-\frac{\ex^{-y^2/2t}}{\sqrt{2 \pi  t}}\erf\left(\frac{y}{\sqrt{2 t}}\frac{\sqrt{t-u}}{\sqrt u}\right)\right)
=\frac{y\ex^{-y^2/2u}}{\sqrt{2\pi u^3}}\frac{1}{\sqrt{2\pi(t-u)}}.\]
Moreover,
\[\lim_{u\to t}\frac{\ex^{-y^2/2t}}{\sqrt{2 \pi  t}}\erf\left(\frac{y}{\sqrt{2 t}}\frac{\sqrt{t-u}}{\sqrt u}\right)=0\]
and
\[\lim_{u\to0}\frac{\ex^{-y^2/2t}}{\sqrt{2 \pi  t}}\erf\left(\frac{y}{\sqrt{2 t}}\frac{\sqrt{t-u}}{\sqrt u}\right)
=\frac{\ex^{-y^2/2t}}{\sqrt{2\pi t}}.\]
This proves the lemma for $z=0$ by the fundamental theorem of calculus. If we then extend
\eqref{Equation: Integral 2} to $z=0$ by continuity, this also implies that $\Phi(y)=0$ for all $y$, concluding the
proof of the lemma for $z>0$.
\end{proof}

\bibliographystyle{plain}
\bibliography{Bibliography} 
\end{document}